\newtheorem{theorem}{Theorem}[section]
\newtheorem{definition}{Definition}[section]
\newtheorem{lemma}{Lemma}[section]
\newtheorem{proposition}{Proposition}[section]
\newtheorem{remark}{Remark}[section]
\numberwithin{equation}{section}
\def\proofname{\noindent {\sl Proof.}}
\begin{document}
\title[Lifespan estimate for the semilinear
E-P-D-T equation]
{Lifespan estimate for the semilinear regular
Euler-Poisson-Darboux-Tricomi equation}

\author[Y.- Q. Li]{Yuequn Li}
\address{Yuequn Li
\newline
School of Mathematical Sciences, Nanjing Normal University Nanjing {210023}, China}
\email{yqli1214@163.com}

\author[F. Guo]{Fei Guo}
\address{Fei Guo \newline
School of Mathematical Sciences and Key Laboratory for NSLSCS,
Ministry of Education, Nanjing Normal University,
Nanjing 210023, China}
\email{guof@njnu.edu.cn}


\begin{abstract}
In this paper, we begin by establishing local well-posedness for the semilinear regular Euler-Poisson-Darboux-Tricomi equation. Subsequently, we derive a lifespan estimate with the Strauss index given by $p=p_{S}(n+\frac{\mu}{m+1}, m)$ for any $\delta>0$, where $\delta$ is a parameter to describe the interplay between damping and mass. This is achieved through the construction of a new test function derived from the Gaussian hypergeometric function and a second-order ordinary differential inequality, as proven by Zhou \cite{Zhou2014}. Additionally, we extend our analysis to prove a blow-up result with the index $p=\max\{p_{S}(n+\frac{\mu}{m+1}, m), p_{F}((m+1)n+\frac{\mu-1-\sqrt\delta}{2})\}$ by applying Kato$^{\prime}$s Lemma ( i.e., Lemma \ref{katolemma} ), specifically in the case of $\delta=1$.
\end{abstract}

\date{Sept. 10, 2024}

\maketitle


\noindent {\sl Keywords\/}: Semilinear Euler-Poisson-Darboux-Tricomi  equation;  Strauss  exponent;  Fujita exponent; Gaussian hypergeometric function; Test function.

\vskip 0.2cm

\noindent {\sl AMS Subject Classification} (2020): 35B44, 35G25 \\

\section{Introduction}\label{s1}

We consider the following  semilinear Euler-Poisson-Darboux-Tricomi equation (EPDT) with a power nonlinearity
\begin{equation}\label{eq}
  \left\{
\begin{aligned}
&\partial_t^2u-t^{2m}\Delta u+\frac{\mu}{t}\partial_tu+\frac{\nu^2}{t^2}u=|u|^p,\ &\quad& t\geq1,\\
&u(1,x)=\varepsilon u_0(x), \ \partial_tu(1,x)=\varepsilon u_1(x),               &\quad&x\in\mathbb{R}^n,
\end{aligned}
 \right.
  \end{equation}
where $m\geq0$, $n\geq1$ denotes the space dimension and $\varepsilon>0$ is a sufficiently small number describing  the smallness of the initial value.  Palmieri \cite{Pa2021} first named (\ref{eq}) the "EPDT" while investigating  its critical exponent problem, one can see \cite{Ab2021}, \cite{ShSi2014} and their references to obtain more results on the generalized Euler-Poisson-Darboux equation. (\ref{eq}) is known as scale-invariant since the corresponding linear equation is invariant under the scaling
\begin{equation}\label{scale}
v(t,x)=u(\lambda^{\frac{1}{m+1}}t, \lambda x).
\end{equation}

(\ref{eq}) becomes the generalized Euler-Poisson-Darboux equation  when $m=\nu=0$, which  is commonly encountered in the fields of gas dynamics, fluid mechanics, elasticity, plasticity, and various physical and material science problems.  If the initial values are given at time zero, this problem is referred to as the singular problem, and if the initial values are given at a non-zero time, it is called the regular problem. D$^\prime$Abbicco, Lucente and Reissig \cite{AbLuRe2015}  determined the critical exponent for the regular problem and  proved  blow-up results in the case of $n\geq2$ with $\mu=2$, and the global well-posedness was only proved in low-dimensional cases ($n=2, 3$). Subsequently, they \cite{AbLu2015(2)} demonstrated the existence of global solutions for the regular problem with spherically symmetric initial data. However, concerning the singular problem, to our best knowledge, there are currently no satisfactory results available and some  partial results can be referred to \cite{Ab2021, Ab2020}.

When $\mu=\nu^2=0$,  (\ref{eq})  reduces to the  semilinear Tricomi equation
 \begin{equation}\label{eq2}
\begin{aligned}
\partial_t^2u-t^{2m}\Delta u=|u|^p,
\end{aligned}
  \end{equation}
and it is  often  used in gas dynamics to describe the conversion between ultrasound and infrasound, one can refer \cite{Frankel1945} for more details. At the early stage,  mathematicians attempted to establish the expression of explicit solutions for the linear Tricomi equation.  Barros-Neto and Gelfand \cite{BaGe1999, BaGe2002, BaGe2005} found  the fundamental solution for 1-D problem $yu_{xx}+u_{yy}=0$ and then Yagdjian \cite{Yagdjian2004}  established the explicit fundamental solution for the generalized Gellerstedt operator $\partial_t^2-t^{2m}\Delta$ in  $\mathbb{R}^{n+1}$.
In recent years, the large time behavior of nonlinear Tricomi equations  has attracted much attention.  Many mathematicians dedicated to  the critical exponent problem for (\ref{eq2}), namely, it is expected that there exists an index  $p_c(n,m)$ such that if $p>p_c(n,m)$, (\ref{eq2}) has small data global solution and when $1<p\leq p_c(n,m)$, the solution to (\ref{eq2}) will blow up even for small initial values. 
Note that when $m=0$, (\ref{eq2}) becomes the classical semilinear wave equation
\begin{equation}\label{eq3}
\partial_t^2u-\Delta u=|u|^p
\end{equation}
with the critical exponent $p_S(n)$,  which is the positive root of the  quadratic equation
 \begin{equation}\label{eeq1}
(n-1)p^2-(n+1)p-2=0\ \text{for}\ n\geq2,
\end{equation}
and $p_S(1)=+\infty$,
one can refer \cite{GeLiSo1997, Glassey1981, John1979, Strauss1981, Ta2001, Zh2006, Zhou2007, Zhou2014} for more details about Strauss conjecture. In the past few years, He, Witt and Yin \cite{He2017II, He2017I, He2020} systematically studied (\ref{eq2}) and determined the critical exponent $p_S(n,m)$, more precisely,  $p_S(n,m)$ is the positive root of the quadratic equation
\begin{equation}\label{eeq2}
\bigl((m+1)n-1\bigr)p^2-\bigl((m+1)(n-2)+3\bigr)p-2(m+1)=0.
\end{equation}
It should be pointed out that the critical exponent $p_S(n,m)$ extends $p_S(n)$ because (\ref{eeq2}) reduces to  (\ref{eeq1}) when $m=0$, that is $p_S(n)=p_S(n,0)$.  Subsequently, Lin and Tu \cite{LinTu2019} obtained the upper bound estimate of lifespan by the test function method when $1<p\leq p_S(n,m)$, and  Sun \cite{Sun2021} gave the lower bound estimate based on the Strichartz estimate established in \cite{He2020}  when $1<p< p_S(n,m)$.

When $m=0$, $(\ref{eq})$ becomes the semilinear wave equation with scale invariant damping and mass
\begin{equation}\label{eq4}
  \left\{
\begin{aligned}
&\partial_t^2u-\Delta u+\frac{\mu}{t}\partial_tu+\frac{\nu^2}{t^2}u=|u|^p, &\quad&t\geq1,\\
&u(1,x)=\varepsilon u_0(x), \ \partial_tu(1,x)=\varepsilon u_1(x),               &\quad&x\in\mathbb{R}^n.
\end{aligned}
 \right.
  \end{equation}
It has been found that the scale of the non-negative number $\delta=(\mu-1)^2-4\nu^2$   reflects an important relationship between the damping term $\frac{\mu}{t}\partial_tu$ and the mass term $\frac{\nu^2}{t^2}u$. Specifically speaking, in \cite{PaRe2019}, it has been proved that the solution to (\ref{eq4}) blows up when $1<p\leq\max\{p_S(n+\mu), p_{F}(n+\frac{\mu-1-\sqrt\delta}{2})$ for $\delta\in(0,1]$  in high dimension cases ($n\geq2$), where  $p_{F}(n)=1+\frac{2}{n}$ denotes the Fujita index appearing in the semilinear heat equation (c.f. \cite{Fu1966, Ha1973}).  For large $\delta$ ($\delta\geq(n+1)^2$), (\ref{eq4}) can be called  ``parabolic-like''  since it has been proved that $p_{F}(n+\frac{\mu-1-\sqrt\delta}{2})$ is the critical exponent in \cite{PaRe2017, Pa2018, PaRe2018}.  Subsequently,  it is shown in \cite{Tu2019} that the solution blows up when $1<p< p_S(n+\mu)$ for all $\delta>0$ and $p=p_S(n+\mu)$ for $\delta<n^2$.

As for (\ref{eq}), Palmieri \cite{Pa2021} proved that for all $\delta\geq0$, the solution blows up when $1<p<\max\{p_S(n+\frac{\mu}{m+1},m), p_{F}((m+1)n+\frac{\mu-1-\sqrt\delta}{2})\}$ and established the upper bound estimate of the lifespan utilizing the iterative argument, where $p_S(n+\frac{\mu}{m+1},m)$ is the positive root to the following quadratic equation
\begin{equation}\label{Lambdaa}
\bigl((m+1)n-1+\mu\bigr)p^2-\bigl((m+1)(n-2)+3+\mu\bigr)p-2(m+1)=0.
\end{equation}
Note that $p_S(n+\mu,0)=p_S(n+\mu)$, so we can make a reasonable conjecture that,   for different values of $\delta$, $p_S(n+\frac{\mu}{m+1},\ m)$, $p_{F}((m+1)n+\frac{\mu-1-\sqrt\delta}{2})$ or  the larger of the two indices represents the critical exponent of (\ref{eq}). The primary objective in this paper is to demonstrate that for $\delta < (m+1)^2n^2$, the solution to (\ref{eq}) blows up in finite time and give the upper bound lifespan estimate when $p=p_S(n+\frac{\mu}{m+1},m)$. Due to the appearance of $t^m$ in the Gellerstedt operator, the test function that works for the wave operator is no longer applicable, this compels us to search for a new one that depends on $m$. Inspired by the treatment of $p=p_S(n,0)$ in \cite{Tu2019}, we construct a new test function using Gaussian hypergeometric functions, and when $m=0$, this function is the same as the one used in \cite{Tu2019}. Additionally, when $\delta=1$,   (\ref{eq}) can be transformed into a Tricomi-type equation without damping and mass terms, and then the second objective of this paper is to  give a blow up result for $p=\max\{p_{S}(n+\frac{\mu}{m+1}, m), p_{F}((m+1)n+\frac{\mu-1-\sqrt\delta}{2})\}$ under the condition $\delta=1$. It is a pity that  when $\delta \neq 1$, we have not found a suitable method to handle the case of  $p=p_{F}((m+1)n+\frac{\mu-1-\sqrt\delta}{2})$, however, in our next paper, we demonstrate that $p_{F}((m+1)n+\frac{\mu-1-\sqrt\delta}{2})$ is the critical exponent for large $\delta$.

 This paper is organized as follows. In Section 2, we provide the definition of the energy solution to (\ref{eq}) and present the main results.  Section 3 is devoted to prove the local well-posedness of  (\ref{eq}) by deriving $(L^1\cap L^2)-L^2$ estimates to the solutions of the corresponding linear equation.  In Section 4,  we give the new test function, and consequently, obtain the upper bound estimate of the lifespan with the Strauss index given by $p=p_{S}(n+\frac{\mu}{m+1}, m)$.   A blow up result is derived in Section 5 in the case of $\delta=1$ with the index $p=\max\{p_{S}(n+\frac{\mu}{m+1}, m), p_{F}((m+1)n+\frac{\mu-1-\sqrt\delta}{2})\}$.

\section{Main results}

Before stating our results, we give the definition of energy solution for (\ref{eq}). Let us introduce the space
\begin{equation}\label{norm1}
D^\sigma(\mathbb{R}^n)=\left\{
\begin{aligned}
&\bigl(L^1(\mathbb{R}^n)\cap H^\sigma(\mathbb{R}^n)\bigr)\times\bigl(L^1(\mathbb{R}^n)\cap H^{\sigma-1}(\mathbb{R}^n)\bigr),&\quad&\sigma>1,\\
&\bigl(L^1(\mathbb{R}^n)\cap H^\sigma(\mathbb{R}^n)\bigr)\times\bigl(L^1(\mathbb{R}^n)\cap L^2(\mathbb{R}^n)\bigr), &\quad&\sigma\in [0,1]
\end{aligned}
 \right.
\end{equation}
with the norm
\begin{equation}\label{norm2}
\Vert(u_0,u_1)\Vert_{D^\sigma}=\left\{
\begin{aligned}
&\Vert u_0\Vert_{L^1}+\Vert u_0\Vert_{H^\sigma}+\Vert u_1\Vert_{L^1}+\Vert u_1\Vert_{H^{\sigma-1}},&\quad&\sigma>1,\\
&\Vert u_0\Vert_{L^1}+\Vert u_0\Vert_{H^\sigma}+\Vert u_1\Vert_{L^1}+\Vert u_1\Vert_{L^{2}}, &\quad&\sigma\in [0,1],
\end{aligned}
 \right.
\end{equation}
and $D^1:=D.$

\begin{definition}\label{def2}
Let $(u_0,u_1)\in D$ be compactly supported in $\mathbb{R}^n$ and $u\in L_{loc}^p\bigl([1,T)\times\mathbb{R}^n \bigr)$.  We say that  $u$ is an energy solution of (\ref{eq}) if
\begin{equation*}
u\in C\bigl([1,T); H^1(\mathbb{R}^n)\bigr) \cap C^1\bigl([1,T); L^2(\mathbb{R}^n)\bigr)
\end{equation*}  satisfies for any $\varPhi(t,x)\in C_0^{\infty}\bigl([1,T)\times \mathbb{R}^n\bigr)$ and $t\in[1,T)$,
\begin{equation}\label{def3}
\resizebox{   0.918\hsize}{!}{$  \begin{split}
&\int_1^t\int_{\mathbb{R}^n}u(s,x)\Bigl(\partial_s^2\varPhi(s,x)-s^{2m}\Delta\varPhi(s,x)-\partial_t\bigl(\frac{\mu}{s}\varPhi(s,x)\bigr)+\frac{\nu^2}{s^2}\varPhi(s,x)\Bigr)dxds\\
&\quad+\int_{\mathbb{R}^n}\bigl(\partial_tu(t,x)\varPhi(t,x)-u(t,x)\partial_t\varPhi(t,x)+\frac{\mu}{t}u(t,x)\varPhi(t,x)\bigr)dx\\
&=\int_1^t\int_{\mathbb{R}^n}\vert u(s,x)\vert^p\varPhi(s,x)dxds\\
&\quad+\varepsilon\int_{\mathbb{R}^n}\Bigl(-u_0(x)\partial_t\varPhi(1,x)+\bigl(\mu u_0(x)+u_1(x)\bigr)\varPhi(1,x)\Bigr)dx.
\end{split}     $}
\end{equation}
\end{definition}

To begin with, we establish the local well-posedness of  (\ref{eq}).
\begin{proposition}
Let $(u_0,u_1)\in D$ compactly supported in $B_M(0)=\{x: \vert x\vert\leq M\}$ with some $M>0$. Suppose
\begin{equation}\label{GNNN}
1<p\leq\frac{n}{n-2},\ n\geq3; \ 1<p<\infty,\ n=1, 2
\end{equation}
and $\mu$, $\nu\geq 0$ such that
\begin{equation}\label{delta}
\delta:=(\mu-1)^2-4\nu^2>0,
\end{equation}
then, there exists a $T>1$ and a unique solution $u\in C\bigl([1,T); H^1(\mathbb{R}^n)\bigr)\cap C^1\bigl([1,T); L^2(\mathbb{R}^n)\bigr)$ to (\ref{eq}).
\end{proposition}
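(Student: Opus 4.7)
I will establish local existence by a Banach contraction argument in the space
\[
X_T := C\bigl([1,T]; H^1(\mathbb{R}^n)\bigr)\cap C^1\bigl([1,T]; L^2(\mathbb{R}^n)\bigr).
\]
Given a source $F$, let $L[F]\in X_T$ denote the unique solution of the linearisation of (\ref{eq}) with initial data $(\varepsilon u_0,\varepsilon u_1)$ and source $F$; I look for a solution of (\ref{eq}) as a fixed point of $\Phi(u):=L[|u|^p]$ on a closed ball $B_R\subset X_T$ of radius $R\simeq\varepsilon\|(u_0,u_1)\|_D$, choosing $T-1$ small enough.

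The first step is a linear $(L^1\cap L^2)\to L^2$ estimate
\[
\|v(t)\|_{H^1}+\|\partial_t v(t)\|_{L^2}\le C(T)\Bigl(\|(\varepsilon u_0,\varepsilon u_1)\|_{D}+\int_1^t\|F(s)\|_{L^1\cap L^2}\,ds\Bigr),\qquad t\in[1,T],
\]
for the linear EPDT. Since $t\in[1,T]$ keeps $t^{2m}$, $\mu/t$ and $\nu^2/t^2$ bounded, the $L^2$ part of the estimate follows from the standard energy identity obtained by pairing the equation with $\partial_t v$, integrating over $\mathbb{R}^n$, and invoking Gr\"onwall's inequality. The $L^1$ component of the target norms (needed for the nonlinear closure below) is obtained via a Duhamel representation built from the fundamental solution of the Gellerstedt operator $\partial_t^2-t^{2m}\Delta$ of Yagdjian \cite{Yagdjian2004}, whose $L^1\to L^2$ bounds are uniformly controlled on $[1,T]$; the lower order terms $\mu t^{-1}\partial_t v + \nu^2 t^{-2} v$ are absorbed as a bounded perturbation.

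For the nonlinear step the key Nemytskii bounds are
\[
\||u|^p\|_{L^1\cap L^2}\le C\|u\|_{H^1}^p,\qquad
\||u|^p-|w|^p\|_{L^1\cap L^2}\le C\bigl(\|u\|_{H^1}^{p-1}+\|w\|_{H^1}^{p-1}\bigr)\|u-w\|_{H^1}.
\]
The $L^2$ parts follow from the Sobolev embedding $H^1\hookrightarrow L^{2p}$ guaranteed by (\ref{GNNN}), together with the pointwise estimate $||u|^p-|w|^p|\le p(|u|^{p-1}+|w|^{p-1})|u-w|$ and H\"older's inequality; the $L^1$ parts come from the additional H\"older bound $\|\,\cdot\,\|_{L^p}\le|\Omega|^{1/(2p)}\|\,\cdot\,\|_{L^{2p}}$ on a bounded set $\Omega$ containing the support of the iterates. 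Plugging these into the linear estimate, both the invariance $\Phi(B_R)\subset B_R$ and the contraction $\|\Phi(u)-\Phi(w)\|_{X_T}\le\tfrac12\|u-w\|_{X_T}$ reduce to the smallness of $C(T)R^{p-1}(T-1)$, which is secured by shrinking $T-1$.

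The main obstacle is propagating the compact support of the iterates through $\Phi$, since without it the $L^1$ Nemytskii bound fails. I would handle this via the finite speed of propagation for the Gellerstedt operator (Yagdjian \cite{Yagdjian2004}): its fundamental solution is supported in the characteristic cone $|x|\le(t^{m+1}-1)/(m+1)$, from which one deduces $\mathrm{supp}\,u(t,\cdot)\subset B_{M+(t^{m+1}-1)/(m+1)}(0)$, a bounded set for $t\in[1,T]$. The hypothesis $\delta>0$ is not strictly needed for short-time existence (since $t\ge 1$ keeps all coefficients bounded); it is recorded because it governs the large-time behaviour exploited in the later blow-up analysis. Uniqueness then comes for free from the contraction property.
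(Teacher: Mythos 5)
Your proposal is correct in its overall architecture and reaches the same endpoint, but it obtains the key linear estimates by a genuinely different (and more elementary) route than the paper. The paper first applies the substitution $u=t^{(\sqrt{\delta}-\mu+1)/2}w$ to remove the mass term and then the Liouville change $s=t^{m+1}/(m+1)$ to reduce the EPDT operator to a scale-invariant damped wave operator $\partial_s^2-\Delta+(1+\tfrac{\sqrt{\delta}}{m+1})s^{-1}\partial_s$; it then imports the sharp $(L^1\cap L^2)\!-\!L^2$ decay estimates of Palmieri--Reissig and Wirth (Lemmas \ref{estimate1}--\ref{estimate2}), splitting into five cases according to the size of $\delta$ relative to $(m+1)^2(n\pm1)^2$, before running the same contraction argument in the weighted space $X(T)$ with the support constraint $\mathrm{supp}\,u(t,\cdot)\subset B_{\phi(t)-\phi(1)+M}$. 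You instead work directly on $[1,T]$, where $t^{2m}$, $\mu/t$, $\nu^2/t^2$ are bounded, and get the $L^2$-based linear bound from the energy identity plus Gr\"onwall. Your route is simpler and, as you correctly observe, does not actually need $\delta>0$ (the paper's does, since $\sqrt{\delta}$ enters its change of variables and its decay exponents); the price is that you get no quantitative decay rates, which the paper's lemmas provide and which are of independent use. Your nonlinear Nemytskii bounds via $H^1\hookrightarrow L^{2p}$ under (\ref{GNNN}) and the support propagation via the characteristic cone $|x|\le M+(t^{m+1}-1)/(m+1)$ match the paper's (\ref{gn1})--(\ref{gn2}) and (\ref{space def}) exactly.

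One soft spot: your claim that Yagdjian's fundamental solution yields uniformly controlled $L^1\to L^2$ bounds on $[1,T]$ is asserted rather than proved, and this is precisely the nontrivial technical content that the paper spends its Lemmas \ref{estimate1}--\ref{estimate2} establishing (via the reduction to the damped wave equation, not via the fundamental solution of the pure Gellerstedt operator). Fortunately your argument does not actually need it: with the energy estimate $\|v(t)\|_{H^1}+\|\partial_t v(t)\|_{L^2}\lesssim \varepsilon\|(u_0,u_1)\|_{H^1\times L^2}+\int_1^t\|F(s)\|_{L^2}\,ds$ and the $L^2$ Nemytskii bound alone, the ball invariance and contraction already close by shrinking $T-1$; the $L^1$ component of the source is superfluous for the local theory. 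I would either delete that step or justify it properly.
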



The lifespan estimate of the solutions to (\ref{eq}) is as follows.

\begin{theorem}\label{theorem2}
Let $m$, $\mu$, $\nu$ $\geq0$, $n\geq1$ and $0<\delta<(m+1)^2n^2$. Assume that the initial data $(u_0,u_1)\in D$ be nonnegative, not identically zero and $supp(u_0,u_1)\subset\{x:\vert x\vert\leq M\}$ for some $0<M<\frac{1}{m+1}$. Consider $p=p_S(n+\frac{\mu}{m+1},m)$, we suppose $p>\frac{2(m+1)}{(m+1)n-\sqrt\delta}$, furthermore, when $n=1$, we assume $\mu\geq m$, then  there exists a $\varepsilon_0=\varepsilon_0(u_0, u_1, m, n, p, M, \mu, \nu)>0$ such that for any $0<\varepsilon\leq\varepsilon_0$, the solution u to (\ref{eq}) blows up in finite time and we have the upper estimate of the lifespan $T(\varepsilon)$
\begin{equation*}
T(\varepsilon)\leq e^{C\varepsilon^{-p(p-1)}}.
\end{equation*}
\end{theorem}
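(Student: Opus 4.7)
The plan is to proceed by a test-function / functional argument in the spirit of Yordanov--Zhang and Tu, adapted to the Gellerstedt operator $\partial_t^2 - t^{2m}\Delta$ by replacing the usual self-similar test function by one built from a Gaussian hypergeometric function. First I would seek a positive solution $\varphi(t,x) = \lambda(t)\psi(x)$ (or more likely a superposition of plane-wave solutions of this form) to the adjoint linear equation
\[
\partial_t^2 \varphi - t^{2m}\Delta \varphi - \partial_t\Bigl(\frac{\mu}{t}\varphi\Bigr) + \frac{\nu^2}{t^2}\varphi = 0 .
\]
The separated-variable ansatz leads, after a change of variables $\tau = t^{m+1}/(m+1)$, to a hypergeometric-type ODE in $t$ whose fundamental solutions can be written explicitly using ${}_2F_1$; the relevant parameters will carry the shift $n+\frac{\mu}{m+1}$, which is exactly why the Strauss index $p_S(n+\frac{\mu}{m+1},m)$ appears. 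Letting $\varphi_0(x)$ be the standard Yordanov--Zhang spatial profile $\int_{S^{n-1}} e^{x\cdot \omega}\,d\omega$ and choosing $\lambda(t)$ to be the bounded/decaying hypergeometric branch, I expect a positive test function $\varphi$ satisfying the adjoint equation, with precise two-sided asymptotics $\lambda(t) \sim t^{a}$ coming from the large-argument expansion of ${}_2F_1$. When $m=0$ this must reduce to Tu's test function in \cite{Tu2019}, which is a useful sanity check on the computation.

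Next I would define the functional
\[
F(t) = \int_{\mathbb{R}^n} u(t,x)\,\varphi(t,x)\,dx.
\]
Plugging $\Phi = \varphi\,\eta(s)$ into the energy identity \eqref{def3} with a cutoff in $s$ and then removing the cutoff, the contribution of the linear part on the left collapses (since $\varphi$ solves the adjoint equation), leaving a second-order integro-differential identity of the form
\[
F''(t) + \frac{\mu}{t}F'(t) + \frac{\nu^2 - \mu}{t^2}F(t) \;=\; \int_{\mathbb{R}^n} |u(t,x)|^p\,\varphi(t,x)\,dx \;+\; \text{initial data terms},
\]
up to a bookkeeping constant depending on the chosen normalization of $\varphi$. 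The initial-data integrals are positive thanks to the sign assumptions on $u_0,u_1$, so they yield $F(1),F'(1) \gtrsim \varepsilon$.

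I would then estimate the right-hand side from below by H\"older: on the forward support $\{|x| \le M + \frac{t^{m+1}-1}{m+1}\}$, which follows from finite propagation for the Tricomi-type operator, one has
\[
\int |u|^p \varphi\,dx \;\ge\; \frac{|F(t)|^p}{\Bigl(\int \varphi^{p/(p-1)}\,dx\Bigr)^{p-1}}.
\]
Estimating the denominator using the large-$|x|$ asymptotics of $\varphi_0(x)\sim |x|^{-(n-1)/2}e^{|x|}$ and the $\lambda(t)$-asymptotics produces an inequality of the form $F''(t) + \frac{\mu}{t}F'(t) \ge C\, t^{-q} F(t)^p$, where the exponent $q$ is tuned so that at $p = p_S(n+\frac{\mu}{m+1},m)$ the effective ODE is exactly critical. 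The key algebraic check is that with the Strauss value of $p$ the accumulated polynomial weight exactly matches the scaling dictated by \eqref{Lambdaa}. The extra hypothesis $p > 2(m+1)/((m+1)n - \sqrt\delta)$ and, for $n=1$, $\mu \ge m$ will enter here to guarantee that the lower estimates on $F$ are not killed by the damping/mass corrections.

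Finally, after multiplying by the integrating factor $t^{\mu}$ and integrating, the inequality is recast in a form to which Zhou's second-order ODE lemma from \cite{Zhou2014} (the critical-exponent version) applies, yielding
\[
T(\varepsilon) \le \exp\bigl(C\varepsilon^{-p(p-1)}\bigr).
\]
The hard part will be the first paragraph: identifying the correct hypergeometric branch and verifying the precise two-sided bounds on $\lambda(t)$ needed for both the support integral (denominator of the H\"older step) and the final matching of exponents. Once those estimates are in hand, the functional argument and the application of Zhou's lemma are essentially mechanical.
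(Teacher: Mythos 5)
Your overall strategy---a positive solution of the adjoint equation built from a Gaussian hypergeometric function, H\"older over the forward light cone, and Zhou's critical ODE lemma---is the paper's strategy, but two points in your outline are off in ways that matter. First, the test function. Your primary ansatz is the separated Yordanov--Zhang form $\lambda(t)\varphi_0(x)$ with $\varphi_0=\int_{S^{n-1}}e^{x\cdot\omega}\,d\omega$; the paper uses that construction only in Section 5 (the $\delta=1$ result via Kato's lemma), and it does not reach the critical Strauss exponent. What Theorem \ref{theorem2} actually requires is the one-parameter \emph{self-similar} family $\Phi_\beta(t,x)=t^{-\beta+1}F\bigl(a,b,\tfrac n2;(m+1)^2|x|^2/t^{2(m+1)}\bigr)$ with $a,b=\frac{2\beta+\mu-1\pm\sqrt\delta}{4(m+1)}$: the hypergeometric function sits in the self-similar variable $z=(m+1)^2|x|^2/t^{2(m+1)}$, not in $t$. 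Your own sanity check against Tu's $m=0$ test function should have flagged this, since Tu's is self-similar as well. The family (rather than a single function) is essential because the proof plays two members against each other: $\beta_{p+\sigma}$, for which the H\"older integral of $\Phi_\beta^{p'}$ over the cone converges and yields the lower bounds $I_{\beta_p}'(t)\ge C\varepsilon^pt$ and $J_{\beta_p}(t)\ge C\varepsilon^p\log(1+t)$, and $\beta_p$, for which it diverges logarithmically. The hypotheses $0<\delta<(m+1)^2n^2$, $p>\frac{2(m+1)}{(m+1)n-\sqrt\delta}$, and $\mu\ge m$ when $n=1$ enter precisely here, placing $\beta_p$ in the window where $a,b>0$, $c-a-b>0$, and the initial-data functionals $\mathcal{E}_{0,\beta,m}$, $\mathcal{E}_{1,\beta,m}$ are nonnegative; your proposal leaves their role as a hand-wave.

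Second, and more seriously, your outline never produces the logarithm that is responsible for the lifespan bound $e^{C\varepsilon^{-p(p-1)}}$. At $p=p_S(n+\frac{\mu}{m+1},m)$ the denominator in your H\"older step is exactly borderline: the paper computes $|\overline{\psi}_{\beta_p}(z)|^{p'}\sim(1-\sqrt z)^{-1}$ near the cone, so the cone integral behaves like $\log t$ rather than a power. The paper then passes from the integrated identity of Lemma \ref{le4.4} (note that, because $\Phi_\beta$ solves the adjoint equation, the weak formulation gives an \emph{integral} identity for $\int_1^t(t-s)H_\beta(s)\,ds$, not the pointwise second-order ODE for $F(t)=\int u\varphi\,dx$ that you write down) to the averaged functionals $I_\beta(t)=\int_1^t(t-s)sH_\beta(s)\,ds$ and $J_\beta(t)=\int_1^t(1+s)^{-3}I_\beta(s)\,ds$, and substitutes $1+t=e^\tau$ so that Zhou's hypotheses $U''+2U'\ge c\tau^{1-p}U^p$ and $U\ge C\varepsilon^p\tau$ hold in the variable $\tau=\log(1+t)$. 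Without this logarithmic bookkeeping---which your ``essentially mechanical'' closing step glosses over---the argument yields at best a polynomial-in-$\varepsilon$ lifespan for subcritical $p$, not the exponential bound claimed at the critical exponent. These are the two pieces you would need to supply to make the proposal into a proof.
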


\begin{remark}
We assess the reasonability for the assumption $p>\frac{2(m+1)}{(m+1)n-\sqrt\delta}$. Since $p=p_S(n+\frac{\mu}{m+1},m)$,  the assumption is  actually a restriction on $\mu$, $\nu$.  We can easily show that  $n=3$, $\mu=3$ and $\nu=\frac{1}{2}$  are suitable for $0<3=\delta<9(m+1)^2$ and this assumption,
furthermore, we can check that (\ref{GNNN}) holds in this case.
\end{remark}


\begin{theorem}\label{theorem3}
Assume that $m$, $\mu$, $\nu$ $\geq0$, $n\geq1$ and $\delta=1$,   the initial data $(u_0,u_1)\in D$ be nonnegative, not identically zero and compactly supported in $\{x:\vert x\vert\leq M\}$ for some $M>0$.   Then the solution to (\ref{eq}) blows up in finite time when $p=\max \{p_S(n+\frac{\mu}{m+1},m), p_{F}((m+1)n+\frac{\mu-1-\sqrt\delta}{2})\}$.
\end{theorem}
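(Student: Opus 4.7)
The plan is to exploit the structural simplification available at $\delta = 1$ to convert (\ref{eq}) into a damping-and-mass-free Tricomi-type equation, and then apply Kato's Lemma to the spatial average of the transformed solution.

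First I would make the Liouville-type substitution $u(t,x) = t^{-\mu/2} v(t,x)$. A direct expansion of $\partial_t^2 u + \frac{\mu}{t}\partial_t u + \frac{\nu^2}{t^2}u$ shows that the first-order-in-$t$ term cancels identically, while the residual coefficient of $v$ collects to
\begin{equation*}
\frac{-\mu^2 + 2\mu + 4\nu^2}{4}\, t^{-\mu/2-2} = -\frac{\delta - 1}{4}\, t^{-\mu/2-2},
\end{equation*}
which vanishes precisely when $\delta = 1$. Thus $v$ satisfies
\begin{equation*}
\partial_t^2 v - t^{2m}\Delta v = t^{\mu(1-p)/2}\,|v|^p,\qquad t\geq 1,
\end{equation*}
with nonnegative initial data $v(1,\cdot) = \varepsilon u_0$, $\partial_t v(1,\cdot) = \varepsilon\bigl(u_1 + \tfrac{\mu}{2}u_0\bigr)$. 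If the maximum in the statement is realized by $p_S(n + \mu/(m+1),m)$, the blow-up already follows from Theorem \ref{theorem2} (after checking its hypotheses at $\delta = 1$), so the substantive new case is $p = p_F\bigl((m+1)n + (\mu-2)/2\bigr)\geq p_S$.

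For this Fujita-type case, I would pass to the spatial integral $F(t) := \int_{\mathbb{R}^n} v(t,x)\,dx$. Finite propagation for the Tricomi operator confines $\mathrm{supp}\, v(t,\cdot)$ to $\{|x| \leq M + (t^{m+1}-1)/(m+1)\}$, whose volume is bounded by $C t^{(m+1)n}$. Integrating the $v$-equation over $\mathbb{R}^n$ gives $F''(t) = t^{\mu(1-p)/2}\int|v|^p\,dx$, and H\"older on the support then produces the ordinary differential inequality
\begin{equation*}
F''(t) \geq C\, t^{-(p-1)[(m+1)n + \mu/2]}\,F(t)^p = C\, t^{-(p-1)(d+1)}\,F(t)^p,
\end{equation*}
where $d := (m+1)n + (\mu - 2)/2$ is the effective Fujita dimension. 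At $p = p_F(d) = 1 + 2/d$, this reads $F''(t)\geq C\,t^{-2-2/d}\,F(t)^p$, placing us exactly on the borderline of Kato's Lemma.

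The final step is then to invoke Lemma \ref{katolemma}. The positivity of the initial data and a comparison argument with the linear Tricomi evolution provides a baseline lower bound $F(t)\geq c_0\varepsilon$ for $t\geq 2$; since $F'' > 0$ and $F'(1) > 0$, one also has the trivial linear-in-$t$ improvement $F(t)\geq c_1\varepsilon t$. Iterating through the ODI yields a sequence of refined lower bounds of the form $F(t)\geq c_j\varepsilon^{p^j}\,t^{a_j}(\log t)^{b_j}$. The critical identity $(p-1)(d+1) - 2 = 2/d$ at $p = p_F(d)$ is exactly what ensures that these iterated bounds overrun any polynomial ceiling on $F$ in finite time. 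The principal obstacle I anticipate is controlling precisely this critical iteration: because we sit on the Fujita borderline rather than strictly above it, each round of bootstrapping improves only by a logarithmic factor, so one must carefully track the accumulating log-exponents $b_j$ and appeal to the sharp ODI form of the lemma established by Zhou \cite{Zhou2014} instead of a simpler polynomial-iteration statement.
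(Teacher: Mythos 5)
Your proposal is correct in substance and follows the same backbone as the paper: the Liouville substitution $u=t^{-\mu/2}w$ killing both the damping and the mass at $\delta=1$, the reduction of the $p_S$-dominant case to Theorem \ref{theorem2}, the functional $F(t)=\int w\,dx$, the ordinary differential inequality $F''\geq C(t+M)^{-q}|F|^p$ with $q=(p-1)\bigl[(m+1)n+\tfrac{\mu}{2}\bigr]=p+1$ at $p=p_F$, and the conclusion via Kato's lemma. The one place you genuinely diverge is the intermediate lower bound: the paper builds a second functional $G(t)=\int w\,\psi\,dx$ with $\psi=\lambda(t)\varphi(x)$, where $\lambda$ is expressed through the modified Bessel function $K_{1/(2(m+1))}$ and solves $\lambda''-t^{2m}\lambda=0$ (Lemmas \ref{lambdaprop}--\ref{G1}), to obtain $F''\gtrsim (t+M)^{-1}$ and hence $F\gtrsim (t+M)\log(t+M)$; you instead get $F\gtrsim \varepsilon t$ directly from the sign and convexity of $F$ (with one preliminary integration of the ODI in the degenerate case $F'(1)=0$) and then feed it back once into the ODI, which yields the same $t\log t$ bound since $p-q=-1$. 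Your route is more elementary and dispenses with the Bessel machinery entirely, at the cost of an $\varepsilon$-dependent constant, which is harmless here because only blow-up, not a lifespan estimate, is claimed. Two small points to tighten: the closing device should be Lemma \ref{katolemma} (the Yordanov--Zhang form, where the $\log$ factor makes the constant $K_0$ exceed the threshold $c_0$), not Zhou's Lemma \ref{le4.7}, which is tailored to the $U''+2U'$ normal form used in Theorem \ref{theorem2}; and the reduction to the Fujita case should be accompanied by the explicit verification (as in the paper's equations (5.1)--(5.2)) that $p_S>p_F$ for $n\geq 3$ and for $n=2$ with $\mu<a(m)$, together with a check of the auxiliary hypotheses of Theorem \ref{theorem2} at $\delta=1$, since you invoke that theorem for those cases.
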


\begin{remark}
After a complex algebraic calculation, we discover that $p_S(n+\frac{\mu}{m+1},m) > p_{F}((m+1)n+\frac{\mu-1-\sqrt\delta}{2})$ holds when $n\geq3$ in the case of $\delta=1$ for any $m\geq0$, then in view of Theorem \ref{theorem2},  we only need to deal with the low dimensions $n=1,2$,  and the details can be found in Section 5.
\end{remark}

\section{The Proof framework of Proposition 2.1}

Through two changes of variables, we can transform (\ref{eq}) into a damped wave equation without the mass term. By following the approach in  \cite{PaRe2018} and  \cite{Wi2004}, we can establish ($L^1\cap L^2)-L^2$ estimates for the corresponding linear equation of (\ref{eq}). Subsequently, the existence of local solutions can be obtained by the contraction mapping principle.

Consider the corresponding linear equation of (\ref{eq})
\begin{equation}\label{eqq}
  \left\{
\begin{aligned}
&\partial_t^2u-t^{2m}\Delta u+\frac{\mu}{t}\partial_tu+\frac{\nu^2}{t^2}u=0,\ &\quad&t\geq\tau\geq1,\\
&u(\tau,x)=u_0(x), \ \partial_tu(\tau,x)= u_1(x),               &\quad&x\in\mathbb{R}^n,
\end{aligned}
 \right.
  \end{equation}
let
\begin{equation}\label{change1}
u(t,x)=t^{\frac{\sqrt{\delta}-\mu+1}{2}}w(t,x),
\end{equation}
where $\delta$ is defined by (\ref{delta}),   (\ref{eqq}) is transformed into
\begin{equation}\label{eqq2}
 \left\{
\begin{aligned}
&\partial_t^2w-t^{2m}\Delta w+\bigl(1+\sqrt\delta\bigr)\frac{\partial_tw}{t}=0,\  &\quad&t\geq\tau\geq1,\\
&w(\tau,x)= w_0(x), \ \partial_tw(\tau,x)= w_1(x),      &\quad&x\in\mathbb{R}^n,
\end{aligned}
 \right.
  \end{equation}
where
\begin{equation*}
\begin{aligned}
w_0(x)&=\tau^{-\frac{\sqrt\delta-\mu+1}{2}}u_0(x), \\
w_1(x)&=\tau^{-\frac{\sqrt\delta-\mu+1}{2}}u_1(x)-\frac{\sqrt\delta-\mu+1}{2}\tau^{-\frac{\sqrt\delta-\mu+3}{2}}u_0(x).
\end{aligned}
\end{equation*}
Set
\begin{equation}\label{change2}
s=\frac{t^{m+1}}{m+1},\ \eta=\frac{\tau^{m+1}}{m+1}\ \text{and} \ v(s,x)=w\Bigl(\bigl((m+1)s\bigr)^{\frac{1}{m+1}}, x\Bigr),
\end{equation}
(\ref{eqq2}) can be transformed into
\begin{equation}\label{eqq3}
 \left\{
\begin{aligned}
&\partial_s^2v-\Delta v+\bigl(1+\frac{\sqrt\delta}{m+1}\bigr)\frac{\partial_sv}{s}=0,&\quad&s\geq\eta, \\
&v(\eta,x)= v_0(x), \ \partial_sv(\eta,x)= v_1(x), &\quad&x\in\mathbb{R}^n,
\end{aligned}
 \right.
  \end{equation}
where
\begin{equation*}
\begin{aligned}
v_0(x)&=w_0(x)=\bigl((m+1)\eta\bigr)^{-\frac{\sqrt\delta-\mu+1}{2(m+1)}}u_0(x), \\
v_1(x)&=\bigl((m+1)\eta\bigr)^{-\frac{m}{m+1}}w_1(x)\\
      &=\bigl((m+1)\eta\bigr)^{-\frac{m}{m+1}-\frac{\sqrt\delta-\mu+1}{2(m+1)}}u_1(x)-\tiny{\frac{\sqrt\delta-\mu+1}{2}}\bigl((m+1)\eta\bigr)^{-\frac{m}{m+1}-\frac{\sqrt\delta-\mu+3}{2(m+1)}}u_0(x).
\end{aligned}
\end{equation*}

We will use the ideas outlined in \cite{PaRe2018} to establish ($L^1\cap L^2)-L^2$ estimates for (\ref{eqq3}), subsequently, the  ($L^1\cap L^2)-L^2$ estimates for (\ref{eqq}) can be obtained through (\ref{change1}) and (\ref{change2}). Here, we only list the results.

\begin{lemma}\label{estimate1}
Assume that $u_0=0$, $u_1\in L^{1}\cap H^{k}$ for some $k>0$ and $\delta>0$. Then for any $k\in [0,1]$, the solution $u$ to (\ref{eqq}) satisfies the decay estimate
\[
\Vert u(t,\cdot)\Vert_{\dot{H}^k}\leq C_m \bigl(\Vert u_1\Vert_{L^1}+\tau^{(m+1)\frac{n}{2}}\Vert u_1\Vert_{L^2}\bigr)\quad\quad\quad\quad\quad\quad\quad\quad\quad
\]
\begin{subequations}\label{linear estimates1}
\begin{numcases}{\times}
t^{-\frac{\mu+m}{2}}\tau^{-(m+1)(k+\frac{n}{2})+\frac{\mu+m}{2}+1},\ \ \ \quad \text{if} \quad  k>\frac{\sqrt\delta}{2(m+1)}+\frac{1}{2}-\frac{n}{2},\label{3.6a}\\
t^{-\frac{\mu+m}{2}}\tau^{-\frac{\sqrt\delta-\mu-1}{2}}(1+\log \frac{t}{\tau})^{\frac{1}{2}}, \ \quad \text{if}\quad  k=\frac{\sqrt\delta}{2(m+1)}+\frac{1}{2}-\frac{n}{2},\label{3.6b}\\
t^{-(m+1)(k+\frac{n}{2})+\frac{\sqrt\delta-\mu+1}{2}}\tau^{-\frac{\sqrt\delta-\mu-1}{2}},\ \text{if} \quad k<\frac{\sqrt\delta}{2(m+1)}+\frac{1}{2}-\frac{n}{2}.\label{3.6c}
\end{numcases}
\end{subequations}
Moreover, for any $k\geq1$,
\[
\Vert \partial_tu(t,\cdot)\Vert_{\dot{H}^{k-1}}\leq C_m \bigl(\Vert u_1\Vert_{L^1}+\tau^{(m+1)(\frac{n}{2}+k-1)}\Vert u_1\Vert_{\dot{H}^{k-1}}+\tau^{(m+1)(\frac{n}{2}+[k-2]_+)}\Vert u_1\Vert_{\dot{H}^{[k-2]_+}}\bigr)\quad\quad\quad\quad\quad\quad\quad\quad\quad
\]
\begin{subequations}\label{linear estimates1}
\begin{numcases}{\quad\times}
t^{m-\frac{\mu+m}{2}}\tau^{-(m+1)(k+\frac{n}{2})+\frac{\mu+m}{2}+1},\quad \quad \text{if} \quad  k>\frac{\sqrt\delta}{2(m+1)}+\frac{1}{2}-\frac{n}{2},\label{3.7a}\\
t^{m-\frac{\mu+m}{2}}\tau^{-\frac{\sqrt\delta-\mu-1}{2}}(1+\log \frac{t}{\tau})^{\frac{1}{2}}, \ \  \quad \text{if}\quad  k=\frac{\sqrt\delta}{2(m+1)}+\frac{1}{2}-\frac{n}{2},\label{3.7b}\\
t^{m-(m+1)(k+\frac{n}{2})+\frac{\sqrt\delta-\mu+1}{2}}\tau^{-\frac{\sqrt\delta-\mu-1}{2}},\ \ \text{if} \quad k<\frac{\sqrt\delta}{2(m+1)}+\frac{1}{2}-\frac{n}{2},\label{3.7c}
\end{numcases}
\end{subequations}
where $[x]_+=\max\{x,0\}$.
\end{lemma}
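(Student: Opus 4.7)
The plan is to follow the roadmap indicated in the paper: apply the two changes of variable (\ref{change1}) and (\ref{change2}) to convert (\ref{eqq}) into the scale-invariant damped wave equation (\ref{eqq3}) for $v(s,x)$ with effective damping coefficient $\mu^\star := 1 + \tfrac{\sqrt{\delta}}{m+1}$, prove the analogous $(L^1 \cap L^2)\to L^2$ bounds for $v$ in the $(s,\eta)$ variables, and then unwind both transformations to read off the estimates on $u$ in the $(t,\tau)$ variables. Since $u_0 = 0$, the transformed data satisfy $v_0 \equiv 0$ and $v_1$ is an explicit multiple of $u_1$, so the work reduces to estimating the inhomogeneous-initial-velocity evolution for (\ref{eqq3}).

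Taking the Fourier transform in $x$ turns (\ref{eqq3}) into the ODE
\begin{equation*}
\partial_s^2 \hat v + \tfrac{\mu^\star}{s} \partial_s \hat v + |\xi|^2 \hat v = 0,
\end{equation*}
and the substitution $\hat v(s,\xi) = s^{(1-\mu^\star)/2} \Psi(s|\xi|)$ reduces this to a Bessel equation whose independent solutions are Hankel functions $H^{(1)}_\rho, H^{(2)}_\rho$ of order $\rho = \tfrac{\mu^\star - 1}{2} = \tfrac{\sqrt{\delta}}{2(m+1)}$. With $\hat v(\eta,\xi)=0$ and $\partial_s \hat v(\eta,\xi) = \hat v_1(\xi)$, a variation-of-constants computation yields an explicit integral representation of $\hat v(s,\xi)$. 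Inserting the small- and large-argument asymptotics of $H^{(j)}_\rho$, in the spirit of the framework developed in \cite{Wi2004} and adapted to scale-invariant damping in \cite{PaRe2018}, produces pointwise upper bounds on $|\hat v(s,\xi)|$ that behave differently on the two frequency zones $\eta|\xi| \lesssim 1$ and $\eta|\xi| \gtrsim 1$.

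The trichotomy in the lemma then emerges by estimating $\||\xi|^k \hat v\|_{L^2}$ separately on each zone. On the high-frequency zone one uses Plancherel together with the oscillatory decay bound, combined either with $\|\hat v_1\|_{L^\infty_\xi} \leq \|v_1\|_{L^1}$ or with $\|\hat v_1\|_{L^2} = \|v_1\|_{L^2}$; the competition between these produces precisely the combination $\|u_1\|_{L^1} + \tau^{(m+1)n/2} \|u_1\|_{L^2}$ appearing in the statement. On the low-frequency zone one uses $|\hat v_1(\xi)| \leq \|v_1\|_{L^1}$ and integrates $|\xi|^{2k}$ against the small-$\xi$ weight coming from the Bessel asymptotic over $\{|\xi|\lesssim 1/\eta\}$; the resulting radial integral converges, diverges with an explicit power of $\eta$, or becomes logarithmic exactly when $k$ sits above, below, or at the threshold $\tfrac{\sqrt{\delta}}{2(m+1)} + \tfrac{1}{2} - \tfrac{n}{2}$, which is the trichotomy of (\ref{3.6a})--(\ref{3.6c}). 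Transferring back to $u$ then multiplies the spatial norms by $t^{(\sqrt{\delta}-\mu+1)/2}$ from (\ref{change1}); time derivatives pick up an additional factor of $t^m$ from $\partial_t = t^m \partial_s$, producing the $t^m$ in (\ref{3.7a})--(\ref{3.7c}); and the rescaling of $v_1$ in terms of $u_1$ contributes the $\tau$-weights. After collecting powers and using $s = t^{m+1}/(m+1)$, $\eta = \tau^{m+1}/(m+1)$, the exponents rearrange into those stated.

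The hard part will be not any single ingredient but accurate bookkeeping: one must track, across all three cases, the contributions from three independent sources (the $u \mapsto w$ reweighting, the time change $s \leftrightarrow t$, and the data conversion) and verify that they combine into the stated piecewise bounds. The derivative estimate requires additionally differentiating the product $u = t^{(\sqrt{\delta}-\mu+1)/2} v$, which produces an undifferentiated $\|v\|_{\dot H^{k-1}}$ term; reversing the data transformation on this term generates the $\|u_1\|_{\dot H^{[k-2]_+}}$ summand in the second estimate, with $[\,\cdot\,]_+$ accounting for the need to keep the Sobolev index nonnegative. The borderline case $k = \tfrac{\sqrt{\delta}}{2(m+1)} + \tfrac{1}{2} - \tfrac{n}{2}$, where the low-frequency integral becomes logarithmic, must be isolated at the frequency-zone level before the transformations are reversed.
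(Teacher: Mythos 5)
Your proposal follows exactly the route the paper indicates: transform (\ref{eqq}) into the scale-invariant damped wave equation (\ref{eqq3}) via (\ref{change1})--(\ref{change2}), establish the $(L^1\cap L^2)$--$L^2$ estimates for (\ref{eqq3}) by the Fourier/Bessel phase-space analysis of \cite{Wi2004} and \cite{PaRe2018}, and unwind the transformations; the paper itself omits these details and only lists the results. Your sketch, including the identification of the order $\rho=\frac{\sqrt\delta}{2(m+1)}$ and the origin of the trichotomy in the low-frequency radial integral, is consistent with that reference argument.
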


\begin{lemma}\label{estimate2}
Assume that $(u_0,u_1)\in D^k$ for some $k>0$ and $\delta>0$. Then for any $k\in [0,1]$, the solution $u$ to (\ref{eqq}) with $\tau=1$ satisfies  the decay estimates
\[
\Vert u(t,\cdot)\Vert_{\dot{H}^k}\leq C_m \Vert (u_0,u_1)\Vert_{D^k}\quad\quad\quad\quad\quad\quad\quad\quad\quad
\]
\begin{subequations}\label{linear estimates1}
\begin{numcases}{\quad\quad\quad\quad\times}
t^{-\frac{\mu+m}{2}},\quad \quad \quad \quad \quad \quad \quad \text{if} \quad k>\frac{\sqrt\delta}{2(m+1)}+\frac{1}{2}-\frac{n}{2},\label{3.8a}\\
t^{-\frac{\mu+m}{2}}(1+\log t)^{\frac{1}{2}}, \quad \quad  \text{if}\quad  k=\frac{\sqrt\delta}{2(m+1)}+\frac{1}{2}-\frac{n}{2},\label{3.8b}\\
t^{-(m+1)(k+\frac{n}{2})+\frac{\sqrt\delta-\mu+1}{2}}, \ \ \text{if} \quad  k<\frac{\sqrt\delta}{2(m+1)}+\frac{1}{2}-\frac{n}{2},\label{3.8c}
\end{numcases}
\end{subequations}
and for any $k\geq1$,
\[
\Vert \partial_tu(t,\cdot)\Vert_{\dot{H}^{k-1}}\leq C_m \Vert (u_0,u_1)\Vert_{D^k}\quad\quad\quad\quad\quad\quad\quad\quad\quad
\]
\begin{subequations}\label{linear estimates1}
\begin{numcases}{\quad\quad\quad\quad\times}
t^{m-\frac{\mu+m}{2}}, \quad\quad\quad\quad\quad\quad\ \ \text{if} \quad k>\frac{\sqrt\delta}{2(m+1)}+\frac{1}{2}-\frac{n}{2},\label{3.9a}\\
t^{m-\frac{\mu+m}{2}}(1+\log t)^{\frac{1}{2}}, \ \ \quad \text{if}\quad  k=\frac{\sqrt\delta}{2(m+1)}+\frac{1}{2}-\frac{n}{2},\label{3.9b}\\
t^{m-(m+1)(k+\frac{n}{2})+\frac{\sqrt\delta-\mu+1}{2}},\ \text{if} \quad  k<\frac{\sqrt\delta}{2(m+1)}+\frac{1}{2}-\frac{n}{2}.\label{3.9c}
\end{numcases}
\end{subequations}
\end{lemma}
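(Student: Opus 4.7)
The plan is to follow the road map spelled out at the start of Section 3: use the two successive changes of variable (\ref{change1}) and (\ref{change2}) to convert (\ref{eqq}) with $\tau=1$ into the scale-invariant damped wave equation (\ref{eqq3}), import the $(L^1\cap L^2)\to L^2$ decay estimates available for that equation from \cite{PaRe2018,Wi2004}, and translate the resulting $s$-decay back into $t$-decay while carefully tracking the multiplicative factors introduced by the substitutions.

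For the middle step I would work on the Fourier side. Writing $\lambda:=1+\sqrt\delta/(m+1)$ for the damping coefficient in (\ref{eqq3}), the Fourier-transformed equation is the ODE $\partial_s^2\hat v+|\xi|^2\hat v+(\lambda/s)\partial_s\hat v=0$, whose fundamental system is expressible through the Bessel functions $J_\nu(|\xi|s)$ and $Y_\nu(|\xi|s)$ of index $\nu:=(\lambda-1)/2=\sqrt\delta/(2(m+1))$ after the substitution $\hat v=s^{-\nu}\tilde v$. Splitting the frequency space into low ($|\xi|s\lesssim 1$) and high ($|\xi|s\gtrsim 1$) regimes and using the small- and large-argument asymptotics of these Bessel functions gives pointwise bounds for $\hat v(s,\xi)$ and $\partial_s\hat v(s,\xi)$: low frequencies are controlled via $\|\hat v_j\|_{L^\infty}\le\|v_j\|_{L^1}$ and high frequencies via Plancherel. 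Integrating $|\xi|^{2k}$ against these pointwise bounds produces the three-case structure at the critical threshold $k_c:=\frac{\sqrt\delta}{2(m+1)}+\frac{1}{2}-\frac{n}{2}$: for $k>k_c$ the low-frequency integral converges and yields the fastest decay, for $k<k_c$ the slower low-frequency Bessel asymptotics dominate, and at $k=k_c$ one picks up the borderline $(1+\log)^{1/2}$ correction.

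To conclude, I would invoke linearity to split the solution with data $(u_0,u_1)$ at $\tau=1$ into its $u_1$-part and its $u_0$-part. The $u_1$-part is precisely Lemma \ref{estimate1} specialized to $\tau=1$, at which the $\tau$-dependent prefactors collapse to constants. The $u_0$-part is handled either by applying the same Fourier argument to the other half of the propagator, or by differentiating the propagator in $\tau$ (which costs one derivative on the data, matching the definition of $D^\sigma$ in (\ref{norm1})). Substituting back through (\ref{change1}) contributes the prefactor $t^{(\sqrt\delta-\mu+1)/2}$, while the Jacobian $ds/dt=t^m$ from (\ref{change2}) accounts for the extra $t^m$ appearing in the $\partial_t u$ estimates; assembling the pieces yields the claimed estimates (\ref{3.8a})--(\ref{3.9c}).

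The main obstacle is bookkeeping rather than conceptual: one must check that the exponents glue correctly at $k=k_c$, i.e.\ that $-\frac{\mu+m}{2}$ from the supercritical regime meets $-(m+1)(k+\frac{n}{2})+\frac{\sqrt\delta-\mu+1}{2}$ from the subcritical regime at the threshold, and similarly for $\partial_t u$. Extracting the logarithmic factor cleanly from the Bessel asymptotics at exactly this threshold, while keeping careful track of how the two successive substitutions interact with the various powers of $t$ and $\tau$, is where the delicacy lies.
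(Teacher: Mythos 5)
Your proposal follows exactly the route the paper intends: the paper itself gives no detailed proof of this lemma, only the outline at the start of Section 3 (transform (\ref{eqq}) via (\ref{change1})--(\ref{change2}) into the scale-invariant damped wave equation (\ref{eqq3}), import the $(L^1\cap L^2)$--$L^2$ estimates of \cite{PaRe2018,Wi2004}, and translate back), and your Fourier/Bessel reconstruction with the threshold $k_c=\frac{\sqrt\delta}{2(m+1)}+\frac12-\frac n2$ is precisely what those references do. The bookkeeping you flag does check out (e.g.\ $-(m+1)(k_c+\frac n2)+\frac{\sqrt\delta-\mu+1}{2}=-\frac{\mu+m}{2}$, so the subcritical and supercritical exponents glue at the threshold), so the proposal is a correct and essentially identical approach.
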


Denote by $E_0(t,1,x)$ and $E_1(t,1,x)$ the  fundamental solution to (\ref{eqq}) with initial data $(u_0,u_1)=(\delta_0,0)$ and $(0,\delta_0)$, respectively, where $\delta_0$ is the Dirac function. By Duhamel's principle, $\int_1^tE_1(t,\tau,x)\ast_{(x)}F(\tau,x)d\tau$
is the solution to the inhomogeneous linear Cauchy problem
\begin{equation*}
  \left\{
\begin{aligned}
&\partial_t^2u-t^{2m}\Delta u+\frac{\mu}{t}\partial_tu+\frac{\nu^2}{t^2}u=F(t,x),\ &\quad&t\geq\tau\geq1,\\
&t=\tau:\ u=0, \ \partial_tu=0,&\quad& x\in \mathbb{R}^n,
\end{aligned}
 \right.
  \end{equation*}
where $\ast_{(x)}$ denotes the convolution with respect to $x$.
Introduce the Banach space
\begin{equation}\label{space def}
\begin{split}
X(T):=\bigg\{&u\in C([1,T]; H^1)\cap C^1([1,T]; L^2) \\
             &\text{such that supp} u(t,\cdot) \subset B_{\phi(t)-\phi(1)+M}\ \text{for all}~t \in [1,T]\bigg\}
\end{split}
\end{equation}
equipped with the norm  $\Vert u\Vert_{X(T)}=\mathop{\max}\limits_{t\in[1,T]} M[u](t)$, where $M[u](t):=\Vert u(t,\cdot)\Vert_{L^2}+\Vert \partial_tu(t,\cdot)\Vert_{L^2}+\Vert \nabla u(t,\cdot)\Vert_{L^2}$.
Let
\begin{equation}\label{X(T,K) def}
  \begin{split}
X(T,K):=\left\{u\in X(T): \|u\|_{X(T)}\leqslant K\right\}
\end{split}
\end{equation}
for some $T>1,K>0$. We define the operator
\begin{equation*}
\mathcal{N}: u\in X(T)\longrightarrow \mathcal{N}(u)=u^l+u^n,
\end{equation*}
where  $u^l=E_0(t,1,x)\ast_{(x)} u_0(x)+E_1(t,1,x)\ast_{(x)} u_1(x)$ is the solution to (\ref{eqq}) with $\tau=1$,  and $u^n=\int_1^tE_1(t,\tau,x)\ast_{(x)}\vert u(\tau,x)\vert^pd\tau$, i.e., $\mathcal{N}(u)$ is the solution to
\begin{equation*}
  \left\{
\begin{aligned}
&\partial_t^2u-t^{2m}\Delta u+\frac{\mu}{t}\partial_tu+\frac{\nu^2}{t^2}u=\vert u\vert^p,&\quad&t\geq1,\\
&u(1,x)=u_0(x), \ \partial_tu(1,x)= u_1(x),              &\quad&x\in\mathbb{R}^n.
\end{aligned}
 \right.
  \end{equation*}
Once we have proved  that for any $u,\tilde{u}\in X(T,K)$,
\begin{align}
&\Vert\mathcal{N}(u)\Vert_{X(T)}\leq C_{1T}\Vert (u_0,u_1)\Vert_{D}+C_{2T}\Vert u\Vert_{X(T)}^{p},\label{kkey1}\\
&\Vert\mathcal{N}(u)-\mathcal{N}(\tilde{u})\Vert_{X(T)}\leq C_{3T}\Vert u-\tilde{u}\Vert_{X(T)}\bigl(\Vert u\Vert_{X(T)}^{p-1}+\Vert \tilde{u}\Vert_{X(T)}^{p-1}\bigr),\label{kkey2}
\end{align}
where $C_{1T}$ is bounded,  $C_{2T}, C_{3T}\rightarrow0$ as $T\rightarrow 1^{+}$, we can obtain the local solution to (\ref{eq}).  Indeed, for sufficiently large  $K$, we
can choose  $T$ sufficiently close to $1$, ensuring that  $\mathcal{N}$ maps $X(T,K)$ into itself and $\mathcal{N}$ is a contraction mapping, therefore, we can get the local existence and uniqueness of the solution to (\ref{eq}) in $X(T)$ by the contraction mapping principle.

Our aim now is to prove (\ref{kkey1}) and (\ref{kkey2}) based on Lemmas \ref{estimate1}-\ref{estimate2}.  The Gagliardo-Nirenberg inequality will be used, one can refer to \cite{Fr1976}.

\begin{lemma}(Gagliardo$-$Nirenberg Inequality)\label{GN}
Let $1< p,q,r<\infty$ and $\theta\in[0,1]$ satisfy
\begin{equation*}
\frac{1}{p}=\theta(\frac{1}{r}-\frac{1}{n})+(1-\theta)\frac{1}{q}.
\end{equation*}
 Then there exists a constant $C=C(p,q,r,n)>0$ such that for any $h\in C_0^1(\mathbb{R}^n)$,
\begin{equation}\label{gnineq}
\Vert h\Vert_{L^p}\leq C\Vert h\Vert_{L^q}^{1-\theta}\Vert \nabla h\Vert_{L^r}^{\theta}.
\end{equation}
\end{lemma}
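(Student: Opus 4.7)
The plan is to prove this classical interpolation inequality by reducing it to two standard ingredients: a Sobolev endpoint embedding and a H\"older-type interpolation between Lebesgue spaces. First I observe that the hypothesis can be rewritten as
\[
\frac{1}{p} = \frac{\theta}{r^{*}} + \frac{1-\theta}{q}, \qquad r^{*}:=\frac{nr}{n-r},
\]
which is precisely the condition needed to factor the target space $L^{p}$ through the intermediate space $L^{r^{*}}$. The strategy is therefore to first bound $\|h\|_{L^{r^{*}}}$ by $\|\nabla h\|_{L^{r}}$, and then to interpolate between $L^{q}$ and $L^{r^{*}}$.

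For the endpoint Sobolev step I would use Gagliardo's classical argument. In the base case $r=1$, write $|h(x)|\leq \int_{\mathbb{R}}|\partial_{i}h|\,dy_{i}$ for each coordinate direction $i$, whence
\[
|h(x)|^{n/(n-1)}\leq \prod_{i=1}^{n}\Bigl(\int_{\mathbb{R}}|\partial_{i}h|\,dy_{i}\Bigr)^{1/(n-1)},
\]
and iterated generalized H\"older (Loomis--Whitney) yields $\|h\|_{L^{n/(n-1)}}\leq C\|\nabla h\|_{L^{1}}$. The general case $1<r<n$ follows by applying this bound to $|h|^{\gamma}$ with $\gamma=\frac{r(n-1)}{n-r}$, using $\nabla|h|^{\gamma}=\gamma|h|^{\gamma-1}\nabla h$, and invoking H\"older to extract $\|\nabla h\|_{L^{r}}$, which gives $\|h\|_{L^{r^{*}}}\leq C\|\nabla h\|_{L^{r}}$. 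For the second step, factoring $|h|^{p}=|h|^{p\theta}\cdot|h|^{p(1-\theta)}$ and applying H\"older with conjugate exponents $r^{*}/(p\theta)$ and $q/(p(1-\theta))$ produces
\[
\|h\|_{L^{p}}\leq \|h\|_{L^{r^{*}}}^{\theta}\|h\|_{L^{q}}^{1-\theta}.
\]
Combining the two steps yields \eqref{gnineq}.

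The main obstacle is the parameter range: the route above requires $r<n$, since for $r\geq n$ the endpoint embedding into $L^{r^{*}}$ breaks down (at $r=n$ one only obtains Trudinger--Moser estimates). The hypothesis $\theta\in[0,1]$ together with $1<p,q,r<\infty$ is needed to keep the factorization through $L^{r^{*}}$ admissible; cases in which the standard chain fails must be handled by an alternative choice of intermediate exponent or, when $r\geq n$, by localizing and using the $C^{0,\alpha}$ embedding. The boundary values $\theta\in\{0,1\}$ are trivial: they collapse \eqref{gnineq} to the identity $\|h\|_{L^{q}}\leq\|h\|_{L^{q}}$ or to the pure Sobolev bound. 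A scaling check $h\mapsto h(\lambda\cdot)$ recovers the exponent relation in the statement, confirming that no other linear combination of the norms on the right-hand side could yield a dimensionally consistent estimate. Since this is the textbook inequality cited from \cite{Fr1976}, the plan above merely recapitulates the standard Sobolev-plus-interpolation argument.
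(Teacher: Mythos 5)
The paper itself gives no proof of this lemma: it is quoted as a classical result with a pointer to Friedman's book \cite{Fr1976}, so there is no internal argument to compare yours against. Judged on its own terms, your sketch is the standard Sobolev-plus-interpolation proof, and the two steps are correct where they apply: the Loomis--Whitney argument gives $\|h\|_{L^{n/(n-1)}}\leq C\|\nabla h\|_{L^{1}}$, the substitution $|h|^{\gamma}$ with $\gamma=\frac{r(n-1)}{n-r}$ upgrades this to $\|h\|_{L^{r^{*}}}\leq C\|\nabla h\|_{L^{r}}$ for $1<r<n$, and the H\"older factorization $|h|^{p}=|h|^{p\theta}|h|^{p(1-\theta)}$ with the conjugate pair $r^{*}/(p\theta)$, $q/(p(1-\theta))$ is legitimate precisely because the stated exponent relation makes $1/p$ a convex combination of $1/r^{*}$ and $1/q$ (and $\theta\in[0,1]$ keeps both H\"older exponents at least $1$).

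The genuine gap is the case $r\geq n$, which you flag but do not resolve, and which is not a fringe case here: the paper applies the lemma with $r=q=2$ in dimensions $n=1,2$ as well (see (\ref{gn1})--(\ref{gn2}) and (\ref{uuuu1})), so for $n=1$ one has $r>n$ and for $n=2$ one has $r=n$, and your main chain through $L^{r^{*}}$ is unavailable in both. Your proposed fix --- localizing and using the $C^{0,\alpha}$ (Morrey) embedding --- does not directly produce the homogeneous multiplicative bound $\|h\|_{L^{p}}\leq C\|h\|_{L^{q}}^{1-\theta}\|\nabla h\|_{L^{r}}^{\theta}$: Morrey gives an additive $W^{1,r}$ bound, and converting it to the multiplicative form requires an extra scaling or normalization argument that you do not supply, while the borderline $r=n$ needs yet another device (for instance, interpolation through $L^{s}$ for large finite $s$ with constants tracked, or Nirenberg's original argument applied to powers of $h$). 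Since the lemma as stated allows all $1<r<\infty$ and the paper genuinely uses $r\geq n$, the proof is incomplete as written; the missing cases are classical but must be argued, not merely named.
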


We proceed with the proof by considering five cases.\\
Case 1: $\frac{\sqrt\delta}{2(m+1)}+\frac{1}{2}-\frac{n}{2}>1$, i.e., $\delta>(m+1)^2(n+1)^2$.

Taking $k=0,1$ in (\ref{3.8c}) and $k=1$ in (\ref{3.9c}) respectively, we get
\begin{equation*}
\begin{aligned}
&\Vert u^l(t,\cdot)\Vert_{L^2}\leq C_mt^{-(m+1)\frac{n}{2}+\frac{\sqrt\delta-\mu+1}{2}}\Vert (u_0,u_1)\Vert_{D^0}\leq C_mt^{\frac{\sqrt\delta+1}{2}}\Vert (u_0,u_1)\Vert_{D},\\
&\Vert\nabla u(t,\cdot)\Vert_{L^2}\leq C_mt^{-(m+1)(1+\frac{n}{2})+\frac{\sqrt\delta-\mu+1}{2}}\Vert (u_0,u_1)\Vert_{D}\leq C_mt^{\frac{\sqrt\delta+1}{2}}\Vert (u_0,u_1)\Vert_{D},\\
&\Vert\partial_tu(t,\cdot)\Vert_{L^2}\leq C_mt^{m-(m+1)(1+\frac{n}{2})+\frac{\sqrt\delta-\mu+1}{2}}\Vert (u_0,u_1)\Vert_{D}\leq C_mt^{\frac{\sqrt\delta+1}{2}}\Vert (u_0,u_1)\Vert_{D},
\end{aligned}
\end{equation*}
which yields
\begin{equation}\label{ul}
M[u^l](t)\leq C_mt^{\frac{\sqrt\delta+1}{2}}\Vert (u_0,u_1)\Vert_{D}.
\end{equation}
By Lemma \ref{GN}, H$\ddot{\text{o}}$lder$^{\prime}$s inequality and the definition of $X(T)$, we have
\begin{equation}\label{gn1}
\begin{aligned}
\Vert \vert u(\tau,\cdot)\vert^p\Vert_{L^2}&=\Vert u(\tau,\cdot)\Vert_{L^{2p}}^p\\
 &\leq C\Vert u(\tau,\cdot)\Vert_{L^2}^{\theta(2p)p}\Vert \nabla u(\tau,\cdot)\Vert_{L^2}^{(1-\theta(2p))p}\leq CM[u]^p(\tau)
\end{aligned}
\end{equation}
and
\begin{equation}\label{gn2}
\begin{aligned}
\Vert \vert &u(\tau,\cdot)\vert^p\Vert_{L^1}=\Vert u(\tau,\cdot)\Vert_{L^{p}}^p\leq C\Vert u(\tau,\cdot)\Vert_{L^{2p}}^{p}(\phi(\tau)-\phi(1)+M)^{\frac{n}{2}}\\
&\leq C\tau^{\frac{(m+1)n}{2}}\Vert u(\tau,\cdot)\Vert_{L^2}^{\theta(2p)p}\Vert \nabla u(\tau,\cdot)\Vert_{L^2}^{(1-\theta(2p))p}\leq C\tau^{\frac{(m+1)n}{2}}M[u]^p(\tau),
\end{aligned}
\end{equation}
where
\begin{equation}\label{theta2p}
\theta(2p)=\frac{n(p-1)}{2p}
\end{equation}
 and the condition (\ref{GNNN}) ensures that $\theta(2p)\in[0,1].$
Then by taking $k=0$ in (\ref{3.6c}),  using Duhamel's principle and (\ref{gn1})-(\ref{gn2}), we can estimate $\Vert u^n(t,\cdot)\Vert_{L^2}$ as
\begin{equation}\label{unl2}
\begin{aligned}
&\Vert u^n(t,\cdot)\Vert_{L^2}\leq C_m t^{-(m+1)\frac{n}{2}+\frac{\sqrt\delta-\mu+1}{2}}\\
&\quad\quad\quad\quad\times\int_1^t\tau^{-\frac{\sqrt\delta-\mu-1}{2}}\big(\Vert \vert u(\tau,\cdot)\vert^p\Vert_{L^1}+\tau^{\frac{(m+1)n}{2}}\Vert \vert u(\tau,\cdot)\vert^p\Vert_{L^2}\big)d\tau\\
&\leq C_mt^{-(m+1)\frac{n}{2}+\frac{\sqrt\delta-\mu+1}{2}}\int_1^t\tau^{\frac{(m+1)n}{2}-\frac{\sqrt\delta-\mu-1}{2}}M[u]^p(\tau)d\tau\\
&\leq C_mt^{-(m+1)\frac{n}{2}+\frac{\sqrt\delta-\mu+1}{2}+\frac{(m+1)n+\mu+1}{2}}\int_1^tM[u]^p(\tau)d\tau\\
&\leq C_mt^{\frac{\sqrt\delta}{2}+1}\int_1^tM[u]^p(\tau)d\tau.
\end{aligned}
\end{equation}
The estimates of $\Vert \nabla u(t,\cdot)\Vert_{L^2}$ and $\Vert \partial_tu(t,\cdot)\Vert_{L^2}$ can be derived by taking $k=1$ in (\ref{3.6c}) and (\ref{3.7c}) and we list the results as
\begin{align}
&\Vert \nabla u^n(t,\cdot)\Vert_{L^2}\leq C_mt^{\frac{\sqrt\delta}{2}+1}\int_1^tM[u]^p(\tau)d\tau,\label{nablaul2}\\
&\Vert \partial_tu^n(t,\cdot)\Vert_{L^2}\leq C_mt^{\frac{\sqrt\delta}{2}}\int_1^tM[u]^p(\tau)d\tau.\label{partialtul2}
\end{align}
Combining (\ref{unl2})-(\ref{partialtul2}) yields
\begin{equation}\label{un}
M[u^n](t)\leq C_mt^{\frac{\sqrt\delta}{2}+1}\int_1^tM[u]^p(\tau)d\tau.
\end{equation}
It follows from (\ref{ul}) and (\ref{un}) that $M[\mathcal{N}u](t)\leq C_mt^{\frac{\sqrt\delta}{2}+1}\big(\Vert(u_0,u_1)\Vert_{D}+\int_1^tM[u]^p(\tau)d\tau\big)$, which indicates the
 validness of (\ref{kkey1}), where $C_{1T}=C_mT^{\frac{\sqrt\delta}{2}+1}$ and $C_{2T}=C_mT^{\frac{\sqrt\delta}{2}+1}(T-1)$.

It remains to prove (\ref{kkey2}). Again using Lemma \ref{GN} and  H$\ddot{\text{o}}$lder$^{\prime}$s inequality, we get
\begin{equation}\label{uuuu1}
\begin{aligned}
\Vert &\vert u(\tau,\cdot)\vert^p-\vert \tilde{u}(\tau,\cdot)\vert^p\Vert_{L^2}\leq C\Vert \vert u(\tau,\cdot)-\tilde{u}(\tau,\cdot)\vert\big(\vert u(\tau,\cdot)\vert^{p-1}+\vert\tilde{u}(\tau,\cdot)\vert^{p-1}\big)\Vert_{L^2}\\
&\leq C\Vert u(\tau,\cdot)-\tilde{u}(\tau,\cdot)\Vert_{L^{2p}}\Vert\vert u(\tau,\cdot)\vert^{p-1}+\vert\tilde{u}(\tau,\cdot)\vert^{p-1}\Vert_{L^{\frac{2p}{p-1}}}\\
&\leq C\Vert u(\tau,\cdot)-\tilde{u}(\tau,\cdot)\Vert_{L^{2}}^{\theta(2p)}\Vert \nabla\big( u(\tau,\cdot)-\tilde{u}(\tau,\cdot)\big)\Vert_{L^{2}}^{1-\theta(2p)}\\
&\quad\quad\quad\quad\quad\quad\quad\quad\quad\quad\quad\quad\quad\quad\times\Bigl(\Vert u(\tau,\cdot)\Vert_{L^{2p}}^{p-1}+\Vert \tilde{u}(\tau,\cdot)\Vert_{L^{2p}}^{p-1}\Bigr)\\
&\leq CM[u-\tilde{u}](\tau)\times\Bigl(\Vert u(\tau,\cdot)\Vert_{L^{2}}^{\theta(2p)(p-1)}\Vert\nabla u(\tau,\cdot)\Vert^{(1-\theta(2p))(p-1)}\\
&\quad\quad\quad\quad\quad\quad\quad\quad\quad\quad\quad+\Vert \tilde{u}(\tau,\cdot)\Vert_{L^{2}}^{\theta(2p)(p-1)}\Vert\nabla \tilde{u}(\tau,\cdot)\Vert^{(1-\theta(2p))(p-1)}\Bigr)\\
&\leq CM[u-\tilde{u}](\tau)\bigr(M[u]^{p-1}(\tau)+M[\tilde{u}]^{p-1}(\tau)\bigr)
\end{aligned}
\end{equation}
and
\begin{equation}\label{uuuu2}
\begin{aligned}
\Vert \vert &u(\tau,\cdot)\vert^p-\vert \tilde{u}(\tau,\cdot)\vert^p\Vert_{L^1}\leq C\Vert \vert u(\tau,\cdot)\vert^p-\vert \tilde{u}(\tau,\cdot)\vert^p\Vert_{L^2}\big(\phi(t)-\phi(1)+M\big)^{\frac{n}{2}}\\
&\leq C\tau^{\frac{(m+1)n}{2}}M[u-\tilde{u}](\tau)\bigr(M[u]^{p-1}(\tau)+M[\tilde{u}]^{p-1}(\tau)\bigr),
\end{aligned}
\end{equation}
where $\theta(2p)$ is defined by (\ref{theta2p}).
Taking $k=0$ in (\ref{3.6c}), using Duhamel's principle and (\ref{uuuu1})-(\ref{uuuu2}), we have
\begin{equation}\label{unnl2}
\begin{aligned}
&\Vert \mathcal{N}u(t,\cdot)-\mathcal{N}\tilde{u}(t,\cdot)\Vert_{L^2}\leq C_mt^{-(m+1)\frac{n}{2}+\frac{\sqrt\delta-\mu+1}{2}}\int_1^t\tau^{-\frac{\sqrt\delta-\mu-1}{2}}\\
&\quad\quad\quad\quad\times\big(\Vert \vert u(\tau,\cdot)\vert^p-\vert \tilde{u}(\tau,\cdot)\vert^p\Vert_{L^1}+\tau^{(m+1)\frac{n}{2}}\Vert \vert u(\tau,\cdot)\vert^p-\vert \tilde{u}(\tau,\cdot)\vert^p\Vert_{L^2}\big)d\tau\\
&\leq C_mt^{-(m+1)\frac{n}{2}+\frac{\sqrt\delta-\mu+1}{2}}\\
&\quad\quad\quad\times\int_1^t\tau^{\frac{(m+1)n}{2}-\frac{\sqrt\delta-\mu-1}{2}}
M[u-\tilde{u}](\tau)\bigr(M[u]^{p-1}(\tau)+M[\tilde{u}]^{p-1}(\tau)\bigr)d\tau\\
&\leq C_mt^{\frac{\sqrt\delta}{2}+1}\int_1^t
M[u-\tilde{u}](\tau)\bigr(M[u]^{p-1}(\tau)+M[\tilde{u}]^{p-1}(\tau)\bigr)d\tau.
\end{aligned}
\end{equation}
By taking $k=1$ in (\ref{3.6c}) and (\ref{3.7c}), we obtain the estimates of $\Vert \nabla\big(\mathcal{N}u(t,\cdot)-\mathcal{N}\tilde{u}(t,\cdot)\big)\Vert_{L^2}$ and $\Vert \partial_t\big(\mathcal{N}u(t,\cdot)-\mathcal{N}\tilde{u}(t,\cdot)\big)\Vert_{L^2}$  as
\begin{equation}\label{unnnn2}
\begin{aligned}
&\Vert \nabla\big(\mathcal{N}u(t,\cdot)-\mathcal{N}\tilde{u}(t,\cdot)\big)\Vert_{L^2}\\
&\quad\quad\leq C_mt^{\frac{\sqrt\delta}{2}+1}\int_1^t
M[u-\tilde{u}](\tau)\bigr(M[u]^{p-1}(\tau)+M[\tilde{u}]^{p-1}(\tau)\bigr)d\tau,\\
&\Vert \partial_t\big(\mathcal{N}u(t,\cdot)-\mathcal{N}\tilde{u}(t,\cdot)\big)\Vert_{L^2}\\
&\quad\quad\leq C_mt^{\frac{\sqrt\delta}{2}}\int_1^t
M[u-\tilde{u}](\tau)\bigr(M[u]^{p-1}(\tau)+M[\tilde{u}]^{p-1}(\tau)\bigr)d\tau.
\end{aligned}
\end{equation}
We infer by  (\ref{unnl2}) and (\ref{unnnn2}) that
\begin{equation*}
M[\mathcal{N}u-\mathcal{N}\tilde{u}](t)\leq C_mt^{\frac{\sqrt\delta}{2}+1}\int_1^tM[u-\tilde{u}](\tau)\bigr(M[u]^{p-1}(\tau)+M[\tilde{u}]^{p-1}(\tau)\bigr)d\tau,
\end{equation*}
which indicates that (\ref{kkey2}) holds, where $C_{3T}=C_mT^{\frac{\sqrt\delta}{2}+1}(T-1)$.

Applying the same argument, we can deal with the remaining cases, so no further details will be provided.\\
Case 2: $\frac{\sqrt\delta}{2(m+1)}+\frac{1}{2}-\frac{n}{2}=1$, i.e., $\delta=(m+1)^2(n+1)^2$.\\
Case 3: $0<\frac{\sqrt\delta}{2(m+1)}+\frac{1}{2}-\frac{n}{2}<1$, i.e., $(m+1)^2(n-1)^2<\delta<(m+1)^2(n+1)^2$.\\
Case 4: $\frac{\sqrt\delta}{2(m+1)}+\frac{1}{2}-\frac{n}{2}=0$, $n>1$,  i.e., $\delta=(m+1)^2(n-1)^2$, $n>1$.\\
Case 5: $\frac{\sqrt\delta}{2(m+1)}+\frac{1}{2}-\frac{n}{2}<0$, $n>1$,  i.e., $0<\delta<(m+1)^2(n-1)^2$, $n>1$.

\section{The Proof of Theorem \ref{theorem2}}
\subsection{Test functions and some asymptotic properties}

In this section, we will provide solutions to the conjugate equation of  (\ref{eq})
\begin{equation}\label{conju}
\partial_t^2\varPhi(t,x)-t^{2m}\Delta\varPhi(t,x)-\partial_t\bigl(\frac{\mu}{t}\varPhi(t,x)\bigr)+\frac{\nu^2}{t^2}\varPhi(t,x)=0.
\end{equation}
Inspired by \cite{Tu2019}, we want to express the solution to (\ref{conju}) in terms of Gaussian hypergeometric functions. For $\beta\in \mathbb{R}$, define
\begin{equation}
\Phi_\beta(t,x)=t^{-\beta+1}\psi_\beta\bigl(\frac{(m+1)^2\vert x\vert^2}{t^{2(m+1)}}\bigr),\ t\geq1,
\end{equation} where $\psi_\beta\in C^2\bigl([0,\frac{1}{m+1})\bigr)$. For brevity, let $z=\frac{(m+1)^2\vert x\vert^2}{t^{2(m+1)}}$.

\begin{lemma}\label{le1}
 $\Phi_\beta(t,x)$ is a solution to (\ref{conju}) if and only if $\psi_\beta$ is a solution to
\begin{equation}\label{psi}
\begin{aligned}
z(1-z)\psi_\beta^{\prime\prime}(z)&+\Bigl[\frac{n}{2}-\bigl(\frac{\beta-1}{m+1}+\frac{1}{2(m+1)}+\frac{\mu}{2(m+1)}+1\bigr)z\Bigr]\psi_\beta^{\prime}(z)\\
                                  &-\frac{\beta(\beta+\mu-1)+\nu^2}{4(m+1)^2}\psi_\beta(z)=0.
\end{aligned}
\end{equation}
\end{lemma}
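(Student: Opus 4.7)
The plan is to verify Lemma 4.1 by a direct substitution, since it is an if-and-only-if identity between a PDE and an ODE arising purely through the chain rule with the self-similar variable $z=(m+1)^2|x|^2/t^{2(m+1)}$. The claim will follow once every term in the conjugate equation is rewritten as $t^{-\beta-1}$ times a polynomial expression in $z$, $\psi_\beta(z)$, $\psi_\beta'(z)$, and $\psi_\beta''(z)$, and the coefficients are compared with those in (\ref{psi}).

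First I would record the elementary identities for $z$. A short computation gives $\partial_t z = -2(m+1)\,z/t$, $\nabla_x z = 2(m+1)^2 x/t^{2(m+1)}$, $|\nabla_x z|^2 = 4(m+1)^2 z / t^{2(m+1)}$, and $\Delta_x z = 2(m+1)^2 n / t^{2(m+1)}$. From these and the ansatz $\Phi_\beta(t,x)=t^{-\beta+1}\psi_\beta(z)$, one computes
\[
t^{2m}\Delta \Phi_\beta = t^{-\beta-1}\bigl[4(m+1)^2 z\,\psi_\beta''(z)+2(m+1)^2 n\,\psi_\beta'(z)\bigr],
\]
and, by applying $\partial_t$ twice to $t^{-\beta+1}\psi_\beta(z)$ and keeping track of all chain-rule contributions,
\[
\partial_t^2 \Phi_\beta = t^{-\beta-1}\Bigl\{\beta(\beta-1)\psi_\beta+2(m+1)\bigl(2\beta-1+2(m+1)\bigr)z\,\psi_\beta'+4(m+1)^2 z^2\,\psi_\beta''\Bigr\}.
\]
Similarly, $-\partial_t\bigl(\tfrac{\mu}{t}\Phi_\beta\bigr)=t^{-\beta-1}\bigl[\mu\beta\,\psi_\beta+2\mu(m+1)z\,\psi_\beta'\bigr]$ and $\tfrac{\nu^2}{t^2}\Phi_\beta = \nu^2 t^{-\beta-1}\psi_\beta$.

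Next I would sum the four contributions, factor out the common $t^{-\beta-1}$, and group by the derivatives of $\psi_\beta$. The $\psi_\beta''$ coefficient collapses to $4(m+1)^2 z(z-1)$; the $\psi_\beta'$ coefficient becomes $-2(m+1)^2 n + 2(m+1)\bigl(2\beta-1+2(m+1)+\mu\bigr) z$; and the $\psi_\beta$ coefficient is $\beta(\beta-1)+\mu\beta+\nu^2 = \beta(\beta+\mu-1)+\nu^2$. Setting the sum equal to zero and dividing by $-4(m+1)^2$ yields
\[
z(1-z)\psi_\beta'' + \Bigl[\tfrac{n}{2}-\tfrac{2\beta+2m+1+\mu}{2(m+1)}z\Bigr]\psi_\beta' - \tfrac{\beta(\beta+\mu-1)+\nu^2}{4(m+1)^2}\psi_\beta = 0,
\]
and a one-line rearrangement of $\tfrac{2\beta+2m+1+\mu}{2(m+1)}=\tfrac{\beta-1}{m+1}+\tfrac{1}{2(m+1)}+\tfrac{\mu}{2(m+1)}+1$ identifies this ODE with (\ref{psi}). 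Since each step is a genuine equivalence (after dividing by the nowhere-vanishing factor $t^{-\beta-1}$), the lemma holds in both directions.

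There is no serious obstacle; the only care needed is bookkeeping in $\partial_t^2 \Phi_\beta$, where the two mixed terms $\partial_t[(-\beta+1)\psi_\beta(z)]$ and $\partial_t[-2(m+1)z\psi_\beta'(z)]$ each contribute to the $z\psi_\beta'$ coefficient and must be combined correctly. Once the coefficient of $z\psi_\beta'$ is checked, the remaining matching is mechanical.
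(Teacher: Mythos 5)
Your computation is correct and follows exactly the paper's own route: direct substitution of the ansatz $\Phi_\beta=t^{-\beta+1}\psi_\beta(z)$ into the conjugate equation, factoring out $t^{-\beta-1}$, and matching coefficients of $\psi_\beta$, $\psi_\beta'$, $\psi_\beta''$ after dividing by $-4(m+1)^2$. All intermediate expressions (including the $z\psi_\beta'$ coefficient $2(m+1)(2\beta-1+2(m+1)+\mu)$ and the zeroth-order coefficient $\beta(\beta+\mu-1)+\nu^2$) agree with those in the paper.
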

\begin{proof}
Substituting
\begin{equation*}
\begin{aligned}
& \partial_t^2\Phi_\beta(t,x)=\beta(\beta-1)t^{-\beta-1}\psi_\beta(z)+4(m+1)^2t^{-\beta-1}z^2\psi_\beta^{\prime\prime}(z)\notag\\
                          &\quad\quad\quad\quad\quad\quad\quad+\bigl[2(m+1)(\beta-1)+2(m+1)\beta+4(m+1)^2\bigr]t^{-\beta-1}z\psi_\beta^{\prime}(z),\notag\\
&\Delta\Phi_\beta(t,x)=4(m+1)^2zt^{-2m}t^{-\beta-1}\psi_\beta^{\prime\prime}(z)+2n(m+1)^2t^{-2m}t^{-\beta-1}\psi_\beta^{\prime}(z),\\
&\frac{\partial}{\partial t}\bigl(\frac{\mu}{t}\Phi_\beta(t,x)\bigr)=\bigl(-\mu+\mu(-\beta+1)\bigr) t^{-\beta-1}\psi_\beta(z)-2(m+1)\mu t^{-\beta-1}z\psi_\beta^{\prime}(z)
\end{aligned}
\end{equation*}
into (\ref{conju}), we get
\begin{equation*}
\begin{aligned}
t^{-\beta-1}&\Bigl\{\bigl[4(m+1)^2z^2-4(m+1)^2z\bigr]\psi_\beta^{\prime\prime}(z)\\
            &+\bigl[\bigl(2(m+1)(2\beta-1)+4(m+1)^2\bigr)z-2(m+1)^2n+2(m+1)\mu z\bigr]\psi_\beta^{\prime}(z)\\
            &+\bigl[\beta(\beta-1)+\mu+\mu(\beta-1)+\nu^2\bigr] \psi_\beta(z)   \Bigr\}=0,
\end{aligned}
\end{equation*}
which implies (\ref{psi}).

\end{proof}

As is well known, when $\vert z\vert<1$,
\begin{equation}\label{Gauss}
F(a,b,c;z)=\displaystyle\sum_{k=0}^{\infty}\frac{(a)_k(b)_k}{k!(c)_k}z^{k}
\end{equation}
is a solution to the Gauss hypergeometric equation
\begin{equation*}
z(1-z)h^{\prime\prime}(z)+[\gamma-(\alpha+\beta+1)z]h^{\prime}(z)-\alpha\beta h(z)=0,
\end{equation*}
where
$(d)_k=
  \left\{
\begin{aligned}
&1,&k=0,\\
&\displaystyle\prod_{j=1}^{k}(d+j-1),&k\geq0
\end{aligned}
 \right.$
and  $F(a,b,c;z)$ is the Gauss hypergeometric function with parameters $(a, b, c).$

\begin{lemma}\label{le2}
Assume that $\vert z\vert<1$ and $\psi_\beta(z)$ is a solution to (\ref{psi}), then
\begin{equation}\label{psi1}
 \psi_\beta(z) =F(a,b,c;z)=\displaystyle\sum_{k=0}^{\infty}\frac{(a)_k(b)_k}{k!(c)_k}z^{k},
\end{equation}
where $a=\frac{2\beta+\mu-1+\sqrt\delta}{4(m+1)}$, $b=\frac{2\beta+\mu-1-\sqrt\delta}{4(m+1)}$ and $c=\frac{n}{2}$.
\end{lemma}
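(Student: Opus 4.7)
The strategy is to recognize that equation (\ref{psi}) is literally a Gauss hypergeometric equation in disguise, and then read off the parameters $(a,b,c)$ by matching coefficients with the standard form given just before the lemma statement.

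First, I would simplify the coefficient of $\psi_\beta'(z)$ in (\ref{psi}) by combining the fractions:
\begin{equation*}
\frac{\beta-1}{m+1}+\frac{1}{2(m+1)}+\frac{\mu}{2(m+1)}+1 = \frac{2\beta+\mu-1}{2(m+1)}+1.
\end{equation*}
Comparing (\ref{psi}) term-by-term with the Gauss hypergeometric equation written just above Lemma \ref{le2} (with parameters $a,b,c$ in place of $\alpha,\beta,\gamma$), I would read off immediately $c=\frac{n}{2}$, together with the Vieta-type system
\begin{equation*}
a+b = \frac{2\beta+\mu-1}{2(m+1)}, \qquad ab = \frac{\beta(\beta+\mu-1)+\nu^2}{4(m+1)^2}.
\end{equation*}

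Next, I would solve this system by computing the discriminant $(a-b)^2=(a+b)^2-4ab$. The key algebraic observation is that
\begin{equation*}
(2\beta+\mu-1)^2-4\bigl[\beta(\beta+\mu-1)+\nu^2\bigr]=(\mu-1)^2-4\nu^2=\delta,
\end{equation*}
since the $\beta^2$ and $\beta(\mu-1)$ terms cancel entirely. Thus $(a-b)^2=\delta/(4(m+1)^2)$, and choosing the sign convention $a\geq b$ yields precisely
\begin{equation*}
a = \frac{2\beta+\mu-1+\sqrt\delta}{4(m+1)}, \qquad b = \frac{2\beta+\mu-1-\sqrt\delta}{4(m+1)},
\end{equation*}
as claimed.

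Finally, since $F(a,b,c;z)$ defined by the series (\ref{Gauss}) is the solution of the hypergeometric equation that is analytic at $z=0$ with value $1$ there, and since the hypothesis $|z|<1$ ensures convergence of that series, any analytic solution $\psi_\beta$ of (\ref{psi}) normalized in this way coincides with $F(a,b,c;z)$, giving (\ref{psi1}). The only real obstacle is the algebraic simplification of the discriminant down to $\delta$; once that cancellation is spotted, the rest is essentially bookkeeping.
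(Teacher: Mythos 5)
Your proposal is correct and follows essentially the same route as the paper: both identify $a,b$ as the roots of the quadratic with Vieta relations $a+b=\frac{2\beta+\mu-1}{2(m+1)}$, $ab=\frac{\beta(\beta+\mu-1)+\nu^2}{4(m+1)^2}$, verify that the discriminant collapses to $\frac{\delta}{4(m+1)^2}$, and then recognize (\ref{psi}) as the standard hypergeometric equation with $c=\frac{n}{2}$. Your closing remark about normalization at $z=0$ is in fact slightly more careful than the paper, which implicitly selects the analytic solution with $\psi_\beta(0)=1$ without comment.
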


\begin{proof}
Consider the quadratic equation
\begin{equation*}
s^2-\frac{2\beta+\mu-1}{2(m+1)}s+\frac{\beta(\beta+\mu-1)+\nu^2}{4(m+1)^2}=0,
\end{equation*}
since $\frac{(2\beta+\mu-1)^2}{4(m+1)^2}-\frac{\beta(\beta+\mu-1)+\nu^2}{(m+1)^2}=\frac{\delta}{4(m+1)^2}>0$,  then $a=\frac{2\beta+\mu-1+\sqrt\delta}{4(m+1)}$ and $b=\frac{2\beta+\mu-1-\sqrt\delta}{4(m+1)}$ are the two real roots. Moreover,
\begin{equation*}
  \left\{
\begin{aligned}
&a+b+1=\frac{\beta-1}{m+1}+\frac{1}{2(m+1)}+\frac{\mu}{2(m+1)}+1,\\
&ab=\frac{\beta(\beta+\mu-1)+\nu^2}{4(m+1)^2},
\end{aligned}
 \right.
 \end{equation*}
 then (\ref{psi}) is transformed into
 \begin{equation*}
 z(1-z)\psi_\beta^{\prime\prime}(z)+\bigl[\frac{n}{2}-(a+b+1)z\bigr]\psi_\beta^{\prime}(z)-ab\psi_\beta(z)=0,
 \end{equation*}
 which shows (\ref{psi1}) by (\ref{Gauss}).

\end{proof}

From Lemmas \ref{le1} and \ref{le2}, we know that
\begin{equation}\label{conju2}
\Phi_\beta(t,x)=t^{-\beta+1}\psi_\beta(z)=t^{-\beta+1}F(a,b,c;z)
\end{equation}
is a solution to (\ref{conju}), where $a, b, c$ are given by Lemma \ref{le2}.

The following lemma  gives the asymptotic behavior of $\psi_\beta(z)$ and $\psi_\beta^{\prime}(z)$.
\begin{lemma}\label{le}
(i) If $\frac{1+\sqrt\delta-\mu}{2}<\beta<\frac{(m+1)n-\mu+1}{2}$,  there exists a $C=C(m,n,\beta,\mu,\nu)>1$ such that for any $z\in[0,1)$,
\begin{equation}\label{asy1}
1\leq\psi_\beta(z)\leq C.
\end{equation}
(ii) If $\beta>\frac{(m+1)(n-2)-\mu+1}{2}$, there exists a $C=C(m,n,\beta,\mu,\nu)>1$ such that for any $z\in[0,1)$,
\begin{equation}\label{asy2}
C^{-1}(1-\sqrt z)^{\frac{(m+1)(n-2)-\mu+1-2\beta}{2(m+1)}}\leq\vert \psi_\beta^{\prime}(z)\vert\leq C(1-\sqrt z)^{\frac{(m+1)(n-2)-\mu+1-2\beta}{2(m+1)}}.
\end{equation}
\end{lemma}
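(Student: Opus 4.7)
The plan is to exploit the explicit representation $\psi_\beta(z)=F(a,b,c;z)$ with $a=\frac{2\beta+\mu-1+\sqrt\delta}{4(m+1)}$, $b=\frac{2\beta+\mu-1-\sqrt\delta}{4(m+1)}$, $c=\frac{n}{2}$ secured by Lemma \ref{le2}, and to read off both bounds from classical properties of the Gauss hypergeometric function at the singular point $z=1$. The key algebraic observation is
\[
c-a-b=\frac{(m+1)n-2\beta-\mu+1}{2(m+1)},
\]
so the inequalities on $\beta$ in (i) and (ii) translate cleanly into sign conditions on $c-a-b$ and on $(c+1)-(a+1)-(b+1)=c-a-b-1$ respectively.

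For part (i), I first note that the lower bound $\beta>\frac{1+\sqrt\delta-\mu}{2}$ forces $b>0$ (hence $a>b>0$), while $c=n/2>0$. All Pochhammer ratios $(a)_k(b)_k/[k!(c)_k]$ are then strictly positive, so $\psi_\beta$ is a nondecreasing power series with $\psi_\beta(0)=1$, which gives $\psi_\beta(z)\ge 1$. The upper bound $\beta<\frac{(m+1)n-\mu+1}{2}$ is equivalent to $c-a-b>0$, so Gauss' summation theorem yields
\[
F(a,b,c;1)=\frac{\Gamma(c)\Gamma(c-a-b)}{\Gamma(c-a)\Gamma(c-b)}<\infty,
\]
and monotonicity delivers $\psi_\beta(z)\le F(a,b,c;1)=:C$ on $[0,1)$.

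For part (ii), I would differentiate the series term by term to obtain $\psi_\beta'(z)=\frac{ab}{c}F(a+1,b+1,c+1;z)$. The hypothesis $\beta>\frac{(m+1)(n-2)-\mu+1}{2}$ is exactly $(c+1)-(a+1)-(b+1)<0$, which is precisely the regime in which the shifted hypergeometric function blows up at $z=1^-$. The main tool is then the connection formula
\[
\begin{aligned}
F(a+1,b+1,c+1;z) &= A\,F(a+1,b+1,a+b-c+2;1-z)\\
&\quad + B(1-z)^{c-a-b-1}F(c-a,c-b,c-a-b;1-z),
\end{aligned}
\]
with $A=\frac{\Gamma(c+1)\Gamma(c-a-b-1)}{\Gamma(c-a)\Gamma(c-b)}$ and $B=\frac{\Gamma(c+1)\Gamma(a+b-c+1)}{\Gamma(a+1)\Gamma(b+1)}$. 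Since $c-a-b-1<0$, the first term stays bounded while the second is singular; using $1-z=(1-\sqrt z)(1+\sqrt z)$ with $1+\sqrt z\in[1,2]$ on $[0,1)$, the exponent of the singular factor translates into $(1-\sqrt z)^{\frac{(m+1)(n-2)-\mu+1-2\beta}{2(m+1)}}$, matching (\ref{asy2}).

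The main obstacle I anticipate is globalizing these endpoint asymptotics into the uniform two-sided bound claimed in (\ref{asy2}). Near $z=1$ the singular term dominates, so the asymptotic gives the estimate; but on any fixed compact subset of $[0,1)$ one has to show that $F(a+1,b+1,c+1;z)$ is bounded away from zero and infinity. This requires positivity of the shifted series (easy once $a+1,b+1,c+1>0$) and the non-degeneracy $ab\neq 0$, so that the constant $\frac{ab}{c}B$ governing the singular term does not vanish. Splitting $[0,1)$ into a neighborhood of $z=1$ handled by the connection formula and its complement handled by continuity plus positivity, and then absorbing the bounded contribution of the first summand into the singular one, should produce both sides of (\ref{asy2}) with an appropriate constant $C>1$.
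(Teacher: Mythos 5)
Your proof is correct and for part (i) is essentially identical to the paper's: positivity of the Pochhammer coefficients gives the lower bound $\psi_\beta(z)\geq\psi_\beta(0)=1$, and Gauss's summation theorem (formula (15.4.20) in \cite{OlLo2010}) with $c-a-b>0$ gives the upper bound. For part (ii) you take a mildly different route: the paper applies the limit formula (15.4.23), $\lim_{z\to1^-}F(a,b,c;z)/(1-z)^{c-a-b}=\Gamma(c)\Gamma(a+b-c)/(\Gamma(a)\Gamma(b))$, directly to $\psi_\beta'(z)=\frac{ab}{c}F(a+1,b+1,\frac n2+1;z)$, whereas you invoke the full connection formula around $z=1$, of which that limit is an immediate corollary. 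The two are equivalent in substance; the connection formula gives you slightly more (an explicit decomposition into bounded plus singular parts), at the cost of having to argue the bounded part is absorbed by the singular one near $z=1$. Your concern about globalizing the endpoint asymptotics to all of $[0,1)$ is well taken and is exactly the step the paper leaves implicit: once the limit at $z=1^-$ is a finite positive number, the two-sided bound on a neighborhood of $1$ follows, and on the complementary compact set one uses continuity together with strict positivity of the series $F(a+1,b+1,\frac n2+1;z)$ (all coefficients positive when $a,b>0$, which holds for the $\beta$'s actually used, namely $\beta>\frac{1+\sqrt\delta-\mu}{2}$). The final passage from $(1-z)$ to $(1-\sqrt z)$ via $1+\sqrt z\in[1,2]$ is the same in both arguments.
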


\begin{proof}
(i) $\frac{1+\sqrt\delta-\mu}{2}<\beta<\frac{(m+1)n-\mu+1}{2}$ guarantees  that $a>0,\ b>0$ and $c-a-b>0$. By the formula (15.4.20) in \cite{OlLo2010}:
\begin{equation*}
F(a,b,c;1)=\frac{\Gamma(c)\Gamma(c-a-b)}{\Gamma(c-a)\Gamma(c-b)}\ \text{for} \ Re(c-a-b)>0,
\end{equation*}
we deduce that (\ref{asy1}) holds. 

(ii) Note the formula (15.5.1) in \cite{OlLo2010}:
\begin{equation*}
\frac{d}{dz}F(a,b,c;z)=\frac{ab}{c}F(a+1,b+1,c+1;z),
\end{equation*}
so
\begin{equation*}
\psi_\beta^{\prime}(z)=\frac{ab}{c}F(a+1,b+1,\frac{n}{2}+1;z).
\end{equation*}
Then the formula (15.4.23)  in \cite{OlLo2010}:
\begin{equation*}
\displaystyle \lim_{z \to  1^-}\frac{F(a,b,c;z)}{(1-z)^{c-a-b}}=\frac{\Gamma(c)\Gamma(a+b-c)}{\Gamma(a)\Gamma(b)}\ \text{for} \ Re(c-a-b)<0
\end{equation*}
shows that
\begin{align*}
&\displaystyle \lim_{z \to  1^-}\frac{\psi_\beta^{\prime}(z)}{(1-z)^{\frac{n}{2}+1-(a+1)-(b+1)}}=\displaystyle \lim_{z \to  1^-}\frac{2ab}{n}\frac{F(a+1,b+1,\frac{n}{2}+1;z)}{(1-z)^{\frac{n}{2}-a-b-1}}\\
&\quad=\displaystyle \lim_{z \to  1^-}\frac{2ab}{n}\frac{F(a+1,b+1,\frac{n}{2}+1;z)}{(1-z)^{\frac{(m+1)(n-2)-\mu+1-2\beta}{2(m+1)}}}=\frac{\Gamma(\frac{n}{2}+1)\Gamma(a+b+1-\frac{n}{2})}{\Gamma(a+1)\Gamma(b+1)}>0,
\end{align*}
where we used that $\frac{n}{2}+1-(a+1)-(b+1)=\frac{(m+1)(n-2)-2\beta-\mu+1}{2(m+1)}<0 $ because $ \beta>\frac{(m+1)(n-2)-\mu+1}{2}$.  Then
\begin{equation*}
C^{-1}(1- z)^{\frac{(m+1)(n-2)-\mu+1-2\beta}{2(m+1)}}\leq\vert \psi_\beta^{\prime}(z)\vert\leq C(1- z)^{\frac{(m+1)(n-2)-\mu+1-2\beta}{2(m+1)}}
\end{equation*}
holds for any $z\in[0,1)$, therefore (\ref{asy2}) holds by noting  $1+\sqrt z$ is bounded from above and below for $z\in[0,1)$.

\end{proof}

\subsection{Some preliminary estimates}

Before starting to prove  Theorem \ref{theorem2}, we will provide some preliminary results. Let
\begin{align}
H_\beta(t)&=\int_{\mathbb{R}^n}\vert u(t,x)\vert^p\Phi_\beta(t,x)dx,\label{G}\\
I_\beta(t)&=\int_1^t(t-s)sH_\beta(s)ds,\label{H}\\
J_\beta(t)&=\int_1^t(1+s)^{-3}I_\beta(s)ds\label{U},
\end{align}
where $\beta\in(\frac{1+\sqrt\delta-\mu}{2},\frac{(m+1)n-\mu+1}{2})$, $\Phi_\beta(t,x)$ is a solution of (\ref{conju}) as construsted in Section 4.1 and $t\geq1$. We will show that $J_\beta(t)$ blows up before $e^{c\varepsilon^{-p(p-1)}}$, hence, $I_\beta(t)$ and $H_\beta(t)$ as well.
\begin{remark}
The condition $\delta<(m+1)^2n^2$ in Theorem \ref{theorem2} guarantees the existence of  $\beta\in(\frac{1+\sqrt\delta-\mu}{2}, \frac{(m+1)n-\mu+1}{2})$, and as mentioned in Section 1, the range of $\delta$ reflects the competitive relationship between the Fujita indice and the Strauss one.
\end{remark}

 Now we give some preliminary estimates.
\begin{lemma}\label{le4.1}
For any $\beta\in(\frac{1+\sqrt\delta-\mu}{2},\frac{(m+1)n-\mu+1}{2})$ and $t\geq1$,
\begin{equation*}
t^2J_\beta(t)\leq\frac{1}{2}\int_1^t(t-s)^2H_\beta(s)ds.
\end{equation*}
\end{lemma}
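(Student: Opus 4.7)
The plan is to compute $J_\beta(t)$ explicitly by swapping the order of integration and reducing everything to a single integral against $H_\beta$. Starting from the chain of definitions (\ref{H})--(\ref{U}),
$$J_\beta(t)=\int_1^t(1+s)^{-3}\int_1^s(s-r)\,r\,H_\beta(r)\,dr\,ds,$$
Fubini's theorem (which is justified because $H_\beta\geq 0$) allows me to interchange the order and rewrite
$$J_\beta(t)=\int_1^t r\,H_\beta(r)\left(\int_r^t(1+s)^{-3}(s-r)\,ds\right)dr,$$
so the whole problem reduces to evaluating the inner $s$-integral.

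A single integration by parts with $u=s-r$ and $dv=(1+s)^{-3}\,ds$ produces a boundary term together with a remaining $\int_r^t(1+s)^{-2}\,ds$ that integrates explicitly. The three resulting terms can then be reorganized using the telescoping identity $\frac{1}{1+r}-\frac{1}{1+t}=\frac{t-r}{(1+r)(1+t)}$. The mildly pleasant point is that these three pieces collapse into a single perfect-square expression $\frac{(t-r)^2}{2(1+r)(1+t)^2}$, leading to the clean identity
$$t^2 J_\beta(t)=\frac{1}{2}\int_1^t\frac{t^2\,r}{(1+r)(1+t)^2}\,(t-r)^2\,H_\beta(r)\,dr.$$

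From here the conclusion is immediate: for all $t,r\geq 1$ one has the elementary bounds $\frac{t^2}{(1+t)^2}\leq 1$ and $\frac{r}{1+r}\leq 1$, so the kernel factor $\frac{t^2 r}{(1+r)(1+t)^2}$ is bounded by $1$ on the domain of integration, and since $H_\beta\geq 0$ by its definition (\ref{G}), this factor can be dropped without flipping the inequality, giving exactly the desired bound. I do not anticipate any real obstacle: the Fubini swap is legitimate by nonnegativity, the integration by parts is routine, and the only step needing a moment of algebraic care is spotting the telescoping that produces $(t-r)^2$ in the numerator and thereby matches the right-hand side of the claim.
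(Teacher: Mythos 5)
Your proof is correct, but it is organized differently from the paper's. The paper works in the opposite direction: it writes $H_\beta(s)=s^{-1}I_\beta''(s)$, integrates by parts twice in $\int_1^t(t-s)^2s^{-1}I_\beta''(s)\,ds$ (using $I_\beta(1)=I_\beta'(1)=0$ and $\partial_s^2\bigl((t-s)^2s^{-1}\bigr)=2t^2s^{-3}$) to get $\int_1^t\frac{2t^2}{s^3}I_\beta(s)\,ds$, and then bounds this below by $2t^2J_\beta(t)$ via the pointwise comparison $s^{-3}\geq(1+s)^{-3}$. You instead apply Fubini directly to the double integral defining $J_\beta$ and evaluate the inner integral in closed form, arriving at the exact identity $J_\beta(t)=\frac{1}{2}\int_1^t\frac{(t-r)^2\,r}{(1+r)(1+t)^2}H_\beta(r)\,dr$ (I checked the computation: $\int_r^t(1+s)^{-3}(s-r)\,ds=\frac{(t-r)^2}{2(1+r)(1+t)^2}$ is right), after which the bound follows from $\frac{t^2r}{(1+r)(1+t)^2}\leq1$. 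The two arguments are dual rearrangements of the same triple integral, but yours buys an exact formula for $J_\beta$ rather than only the one-sided estimate, at the cost of a slightly longer computation; the paper's is shorter because only the second derivative of $(t-s)^2s^{-1}$ is needed. One small point to make explicit in either version: the nonnegativity $H_\beta\geq0$ is not purely definitional from (\ref{G}) — it also uses $\Phi_\beta\geq0$, which for $\beta$ in the stated range follows from part (i) of Lemma \ref{le}.
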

\begin{proof}
Differentiating (\ref{H}), we have
\begin{equation*}
I_\beta^{\prime}(t)=\int_1^tsH_\beta(s)ds \quad \text{and} \quad I_\beta^{\prime\prime}(t)=tH_\beta(t).
\end{equation*}
Since $I_\beta(1)=I_\beta^{\prime}(1)=0$, it follows from integration by parts that
\begin{equation*}
\begin{aligned}
\int_1^t(t-s)^2H_\beta(s)ds&=\int_1^t(t-s)^2s^{-1}I_\beta^{\prime\prime}(s)ds=-\int_1^t\frac{\partial}{\partial s}\bigl((t-s)^2s^{-1}\bigr)I_\beta^{\prime}(s)ds\\
                                     &=\int_1^t\frac{\partial^2}{\partial s^2}\bigl((t-s)^2s^{-1}\bigr)I_\beta(s)ds=\int_1^t\frac{2t^2}{s^3}I_\beta(s)ds\geq 2t^2J_\beta(t).
\end{aligned}
\end{equation*}

\end{proof}

\begin{lemma}\label{le4.4}
Suppose that $(u_0,u_1)\in D$  be nonnegative, not identically zero  and $supp(u_0,u_1)\subset\{x:\vert x\vert\leq M \}$ for some $M<\frac{1}{m+1}$, then for any $\beta\in(\frac{1+\sqrt\delta-\mu}{2},\frac{(m+1)n-\mu+1}{2})\cap [1-\mu, +\infty)$,  $\delta<(m+1)^2n^2$ and $t\geq1$, we have
\begin{equation}\label{E1}
\begin{aligned}
&\varepsilon\mathcal{E}_{0,\beta,m}(u_0)+\varepsilon\mathcal{E}_{1,\beta,m}(u_0,u_1)(t-1)+\int_1^t(t-s)H_\beta(s)ds\\
&\quad=\int_{\mathbb{R}^n}u(t,x)\Phi_\beta(t,x)dx+\int_1^ts^{-\beta}\int_{\mathbb{R}^n}u(s,x)\overline{\psi_\beta}\bigl(\frac{(m+1)^2\vert x\vert^2}{s^{2(m+1)}}\bigr)dxds,
\end{aligned}
\end{equation}
where
\begin{equation*}
\begin{aligned}
&\overline{\psi}_\beta(z)=(2\beta+\mu-2)\psi_\beta(z)+4(m+1)z\psi_\beta^{\prime}(z),\\
&\mathcal{E}_{0,\beta,m}(u_0)=\int_{\mathbb{R}^n}u_0(x)\psi_\beta\bigl((m+1)^2\vert x\vert^2\bigr)dx\geq 0,\\
&\mathcal{E}_{1,\beta,m}(u_0,u_1)=\int_{\mathbb{R}^n}u_1(x)\psi_\beta\bigl((m+1)^2\vert x\vert^2\bigr)dx\\
                               &\quad+\int_{\mathbb{R}^n}u_0(x)\Bigl[2(m+1)^3\vert x\vert^2\psi_\beta^{\prime}\bigl((m+1)^2\vert x\vert^2\bigr)+(\mu+\beta-1)\psi_\beta\bigl((m+1)^2\vert x\vert^2\bigr) \Bigr]dx\geq 0.\\
\end{aligned}
\end{equation*}

\end{lemma}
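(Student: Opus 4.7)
The plan is to substitute the test function $\varPhi=\Phi_\beta(s,x)$ into the energy-solution identity (\ref{def3}), and then integrate the resulting identity once more in time so as to produce the factor $(t-s)$ appearing on the right-hand side of (\ref{E1}). By Lemmas \ref{le1}--\ref{le2}, $\Phi_\beta$ solves the conjugate equation (\ref{conju}), so the first space-time integral in (\ref{def3}) vanishes identically. Although $\Phi_\beta$ is not compactly supported in time, this substitution can be justified by a cutoff argument: by finite speed of propagation, the support of $u(s,\cdot)$ lies inside $B_{\phi(s)-\phi(1)+M}$ with $\phi(s)=s^{m+1}/(m+1)$, and the hypothesis $M<1/(m+1)$ is precisely what keeps this support strictly inside $\{z<1\}$, where $\psi_\beta\in C^2$. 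Introducing a temporal cutoff $\chi(s)\equiv 1$ on $[1,t]$ with support just above $t$ (and an obvious spatial cutoff outside the propagation cone) and passing to the limit yields, for every $\tau\in[1,t]$,
$$A(\tau):=\int_{\mathbb{R}^n}\Bigl[\partial_tu(\tau)\Phi_\beta(\tau)-u(\tau)\partial_t\Phi_\beta(\tau)+\tfrac{\mu}{\tau}u(\tau)\Phi_\beta(\tau)\Bigr]dx=\int_1^\tau H_\beta(s)\,ds+\varepsilon E,$$
where $E=\int_{\mathbb{R}^n}\bigl[-u_0\,\partial_t\Phi_\beta(1,x)+(\mu u_0+u_1)\Phi_\beta(1,x)\bigr]dx$.

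I would then integrate $A(\tau)$ over $\tau\in[1,t]$. The right-hand side immediately becomes $\int_1^t(t-s)H_\beta(s)\,ds+\varepsilon E(t-1)$. For the left-hand side, the decomposition $\partial_tu\cdot\Phi_\beta=\partial_\tau(u\Phi_\beta)-u\,\partial_\tau\Phi_\beta$ gives
$$A(\tau)=\tfrac{d}{d\tau}\!\int_{\mathbb{R}^n}u\Phi_\beta\,dx\;-\;2\!\int_{\mathbb{R}^n}u\,\partial_\tau\Phi_\beta\,dx\;+\;\tfrac{\mu}{\tau}\!\int_{\mathbb{R}^n}u\Phi_\beta\,dx.$$
A direct chain-rule computation in $z=(m+1)^2|x|^2/\tau^{2(m+1)}$ gives $\partial_\tau\Phi_\beta=-\tau^{-\beta}\bigl[(\beta-1)\psi_\beta(z)+2(m+1)z\psi_\beta'(z)\bigr]$, so the last two terms of $A(\tau)$ collapse to $\tau^{-\beta}\int u\,\bigl[(2\beta+\mu-2)\psi_\beta+4(m+1)z\psi_\beta'\bigr]dx=\tau^{-\beta}\int u\,\overline{\psi_\beta}(z)\,dx$. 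The first term integrates telescopically to $\int u(t)\Phi_\beta(t)\,dx-\varepsilon\mathcal{E}_{0,\beta,m}(u_0)$, since $u(1,x)=\varepsilon u_0$ and $\Phi_\beta(1,x)=\psi_\beta((m+1)^2|x|^2)$.

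Rearranging produces (\ref{E1}) provided I verify $E=\mathcal{E}_{1,\beta,m}(u_0,u_1)$. This is a one-line computation: at $\tau=1$, $\partial_t\Phi_\beta(1,x)=-(\beta-1)\psi_\beta((m+1)^2|x|^2)-2(m+1)^3|x|^2\psi_\beta'((m+1)^2|x|^2)$, and substituting into the definition of $E$ reproduces the expression for $\mathcal{E}_{1,\beta,m}$ verbatim. The nonnegativity assertions are then immediate: by Lemma \ref{le}(i) one has $\psi_\beta\ge 1$ on $[0,1)$ for $\beta$ in the prescribed range, so $u_0\ge 0$ gives $\mathcal{E}_{0,\beta,m}(u_0)\ge 0$; for $\mathcal{E}_{1,\beta,m}$, the summand $u_1\psi_\beta\ge 0$ is trivial, while the remaining integrand $u_0\bigl[(\mu+\beta-1)\psi_\beta+2(m+1)^3|x|^2\psi_\beta'\bigr]$ is nonnegative because $\beta\ge 1-\mu$ makes $\mu+\beta-1\ge 0$, and the power series (\ref{psi1}) for $\psi_\beta$ has all strictly positive coefficients on the admissible range (since $a,b>0$ follows from $\beta>(1+\sqrt\delta-\mu)/2$ and $c=n/2>0$), so $\psi_\beta'\ge 0$ termwise on $[0,1)$.

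The main obstacle I expect is not the algebraic bookkeeping but the rigorous justification of using the non-compactly-supported $\Phi_\beta$ as a test function in (\ref{def3}); the sharp support condition $M<1/(m+1)$ is exactly what is needed so that $(m+1)|x|<s^{m+1}$ holds strictly on the support of $u(s,\cdot)$ for every $s\in[1,t]$, keeping both $\psi_\beta$ and $\psi_\beta'$ uniformly bounded there and allowing the cutoff approximation to pass to the limit cleanly. Once that technical point is in place, the remainder of the argument is a careful bookkeeping exercise with the two algebraic identities above for $\partial_t\Phi_\beta$ and $\overline{\psi_\beta}$.
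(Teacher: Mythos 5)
Your proposal is correct and follows essentially the same route as the paper: substitute $\Phi_\beta$ into the weak formulation so the space-time integral drops out (since $\Phi_\beta$ solves the conjugate equation), rewrite the boundary term as $\frac{d}{d\tau}\int u\Phi_\beta\,dx+\tau^{-\beta}\int u\,\overline{\psi_\beta}(z)\,dx$ via the same chain-rule identity for $\partial_\tau\Phi_\beta$, integrate once more in time to produce the $(t-s)$ kernel, and deduce nonnegativity of $\mathcal{E}_{0,\beta,m}$, $\mathcal{E}_{1,\beta,m}$ from $\psi_\beta\geq 1$, $\psi_\beta'>0$ and $\beta\geq 1-\mu$. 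The only addition is your explicit cutoff justification for the non-compactly-supported test function, which the paper leaves implicit.
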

\begin{remark}
Noting that $1-\mu\leq \frac{(m+1)n-\mu+1}{2}$, so  $(\frac{1+\sqrt\delta-\mu}{2},\frac{(m+1)n-\mu+1}{2})\cap [1-\mu, \infty)\neq \varnothing$. We claim that $\mathcal{E}_{0,\beta,m}(u_0)$ and $\mathcal{E}_{1,\beta,m}(u_0,u_1)$ are nonnegative. Indeed, since $\beta\in(\frac{1+\sqrt\delta-\mu}{2},\frac{(m+1)n-\mu+1}{2})$ and
$\psi_\beta\bigl((m+1)^2\vert x\vert^2\bigr)=F(a,b;\frac{n}{2};(m+1)^2\vert x\vert^2)$,
(\ref{asy1}) shows that $\mathcal{E}_{0,\beta,m}(u_0)\geq0$; obviously, $\psi_\beta^{\prime}\bigl((m+1)^2\vert x\vert^2\bigr)=\frac{2ab}{n}F(a+1,b+1;\frac{n}{2};(m+1)^2\vert x\vert^2)>0$,  $\beta\geq1-\mu$ and the nonnegativity of  $u_0,  u_1$ imply that  $\mathcal{E}_{1,\beta,m}(u_0,u_1)\geq0$.
\end{remark}

\renewcommand{\proofname}{\it  Proof of Lemma \ref{le4.4}}
\begin{proof}
Note (\ref{conju}), (\ref{G}) and substitute $\Phi_\beta(t,x)$ into (\ref{def3}), we obtain

\begin{align}
&\int_{\mathbb{R}^n}\bigl(\partial_tu(t,x)\varPhi_\beta(t,x)-u(t,x)\partial_t\varPhi_\beta(t,x)+\frac{\mu}{t}u(t,x)\varPhi_\beta(t,x)\bigr)dx\label{4.13}\\
&\quad=\int_1^tH_\beta(s)ds+\varepsilon\int_{\mathbb{R}^n}\Bigl(-u_0(x)\partial_t\varPhi_\beta(1,x)+\bigl(\mu u_0(x)+u_1(x)\bigr)\varPhi_\beta(1,x)\Bigr)dx.\nonumber
\end{align}
Since
$\partial_t\Phi_\beta(1,x)=(-\beta+1)\psi_\beta\bigl((m+1)^2\vert x\vert^2\bigr)-2(m+1)^3\vert x\vert^2\psi_\beta^{\prime}\bigl((m+1)^2\vert x\vert^2\bigr)$
and $\Phi_\beta(1,x)=\psi_\beta\bigl((m+1)^2\vert x\vert^2\bigr)$, then
\begin{align}\label{cau1}
&\int_{\mathbb{R}^n}\Bigl(-u_0(x)\partial_t\varPhi_\beta(1,x)+\bigl(\mu u_0(x)+u_1(x)\bigr)\varPhi_\beta(1,x)\Bigr)dx\nonumber\\
&\quad =\int_{\mathbb{R}^n}u_1(x)\psi_\beta\bigl((m+1)^2\vert x\vert^2\bigr)dx\\
&\quad\quad+\int_{\mathbb{R}^n}u_0(x)\Bigl[2(m+1)^3\vert x\vert^2\psi_\beta^{\prime}\bigl((m+1)^2\vert x\vert^2\bigr)+(\mu+\beta-1)\psi_\beta\bigl((m+1)^2\vert x\vert^2\bigr)\Bigr]\nonumber\\
&\quad =\mathcal{E}_{1,\beta,m}(u_0,u_1)\geq0\nonumber.
\end{align}
Further calculation leads to
\begin{equation}
\begin{aligned}\label{cau2}
&\int_{\mathbb{R}^n}\bigl(\partial_tu(t,x)\varPhi_\beta(t,x)-u(t,x)\partial_t\varPhi_\beta(t,x)+\frac{\mu}{t}u(t,x)\varPhi_\beta(t,x)\bigr)dx\\
&\quad=\frac{d}{dt}\int_{\mathbb{R}^n}u(t,x)\varPhi_\beta(t,x)dx+t^{-\beta}\int_{\mathbb{R}^n}\overline{\psi_\beta}(z)u(t,x)dx.
\end{aligned}
\end{equation}
If follows from (\ref{4.13})-(\ref{cau2}) that
\begin{equation*}
\begin{aligned}
&\frac{d}{dt}\int_{\mathbb{R}^n}u(t,x)\varPhi_\beta(t,x)dx+t^{-\beta}\int_{\mathbb{R}^n}\overline{\psi_\beta}(z)u(t,x)dx=\int_1^tH_\beta(s)ds+\varepsilon\mathcal{E}_{1,\beta,m}(u_0,u_1).
\end{aligned}
\end{equation*}
A further integration by parts over $[1,t]$ provides that
\begin{equation*}
\begin{aligned}
&\int_{\mathbb{R}^n}u(t,x)\varPhi_\beta(t,x)dx-\varepsilon \mathcal{E}_{0,\beta,m}(u_0)+\int_1^ts^{-\beta}\int_{\mathbb{R}^n}u(s,x)\overline{\psi_\beta}\bigl(\frac{(m+1)^2\vert x\vert^2}{s^{2(m+1)}}\bigr)dxds\\
&\quad=\varepsilon\mathcal{E}_{1,\beta,m}(u_0,u_1)(t-1)+\int_1^t(t-s)H_\beta(s)ds,
\end{aligned}
\end{equation*}
which implies (\ref{E1}).

\end{proof}

For any $q>1$, consider
\begin{equation}
\beta_q=\frac{(m+1)n-\mu+1}{2}-\frac{m+1}{q}
\end{equation}
and let $p^{\prime}$  be the conjugate number of $p$, i.e., $\frac{1}{p}+\frac{1}{p^{\prime}}=1$.
\begin{lemma}\label{le4.5}
Under the assumptions of Lemma \ref{le4.4}, let $p>\frac{2(m+1)}{(m+1)n-\sqrt\delta}$ and when $n=1$, we assume $\mu\geq m.$\\
(i) When $q>p$, there exists a $C=C(m,n,\beta,\mu,\nu)>0$ such that
\begin{equation}\label{E21}
\begin{aligned}
&\varepsilon\mathcal{E}_{0,\beta_q,m}(u_0)+\varepsilon\mathcal{E}_{1,\beta_q,m}(u_0,u_1)(t-1)+\int_1^t(t-s)H_{\beta_q}(s)ds\\
&\quad\leq C t^{-\beta_q+1+\frac{(m+1)n}{p^{\prime}}}\Vert u(t,\cdot)\Vert_{L^{p}(\mathbb{R}^n)}+C\int_1^ts^{-\beta_p+\frac{(m+1)n}{p^{\prime}}}\Vert u(s,\cdot)\Vert_{L^{p}(\mathbb{R}^n)}ds.
\end{aligned}
\end{equation}
(ii) When $q=p$, there exists a $C=C(m,n,\beta,\mu,\nu)>0$ such that
\begin{equation}\label{E22}
\begin{aligned}
&\int_1^t(t-s)H_{\beta_p}(s)ds\leq Ct^{-\beta_p+1+\frac{(m+1)n}{p^{\prime}}}\Vert u(t,\cdot)\Vert_{L^{p}(\mathbb{R}^n)}\\
                    &\quad+C\int_1^ts^{\frac{(m+1)n}{p^\prime}-\beta_p}(\log s)^{\frac{1}{p^\prime}}\Vert u(s,\cdot)\Vert_{L^{p}(\mathbb{R}^n)}ds.
\end{aligned}
\end{equation}
\end{lemma}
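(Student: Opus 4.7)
The plan is to start from the identity in Lemma \ref{le4.4} and apply H\"older's inequality with conjugate exponents $p,p'$ to each of the two terms on its right-hand side. Since the $\mathcal{E}_{0,\beta,m}$ and $\mathcal{E}_{1,\beta,m}$ contributions on the left are non-negative, for (ii) they may simply be dropped, while in (i) they are retained because they will be useful later in the proof of Theorem \ref{theorem2}.

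For the pointwise term $\int_{\mathbb{R}^n} u(t,x)\Phi_{\beta_q}(t,x)\,dx$, I would use that $\psi_{\beta_q}$ is bounded on $[0,1)$ by Lemma \ref{le}(i), which applies because $\beta_q\in(\frac{1+\sqrt\delta-\mu}{2},\frac{(m+1)n-\mu+1}{2})$ under the hypothesis $p>\frac{2(m+1)}{(m+1)n-\sqrt\delta}$ (and $q\geq p$). Since $\operatorname{supp}u(t,\cdot)$ has Lebesgue measure $O(t^{(m+1)n})$, H\"older's inequality yields $\Vert\Phi_{\beta_q}(t,\cdot)\Vert_{L^{p'}}\leq Ct^{-\beta_q+1+(m+1)n/p'}$, producing the first term on the RHS in both (i) and (ii).

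The heart of the argument is the estimate of $\Vert\overline{\psi_{\beta_q}}(z)\Vert_{L^{p'}(dx)}$ for the integral term. Lemma \ref{le}(ii), together with the algebraic identity $\frac{(m+1)(n-2)-\mu+1-2\beta_q}{2(m+1)}=-1/q'$, implies $|z\psi_{\beta_q}'(z)|\leq C(1-\sqrt z)^{-1/q'}$ while $\psi_{\beta_q}$ itself remains bounded. After the scaling $y=(m+1)x/s^{m+1}$, the integration region becomes $|y|\leq r(s):=1-(1-(m+1)M)/s^{m+1}<1$, and the key computation reduces to $\int_0^{r(s)}(1-\rho)^{-p'/q'}\rho^{n-1}d\rho$. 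For $q>p$ (part (i)), $p'/q'>1$, so this integral is finite but grows like $(1-r(s))^{1-p'/q'}\sim s^{(m+1)(p'/q'-1)}$; taking the $p'$-th root and combining with the Jacobian gives $\Vert\overline{\psi_{\beta_q}}(z)\Vert_{L^{p'}(dx)}\leq Cs^{(m+1)(n/p'+1/p-1/q)}$, and the algebraic identity $-\beta_q+(m+1)(1/p-1/q)=-\beta_p$ converts the prefactor $s^{-\beta_q}$ into $s^{-\beta_p+(m+1)n/p'}$ as claimed. For $q=p$ (part (ii)), $p'/q'=1$, so $\int_0^{r(s)}(1-\rho)^{-1}d\rho\leq C\log s$, which introduces the extra $(\log s)^{1/p'}$ factor.

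The main difficulty is correctly handling the near-boundary singularity of $\psi_{\beta_q}'$ and matching it against the scaling to recover the $\beta_p$-dependent estimate on the right-hand side. The coincidence---that the singularity exponent $-1/q'$ in Lemma \ref{le}(ii) cancels precisely against the difference $\beta_q-\beta_p=(m+1)(1/p-1/q)$ after integrating against the volume factor $s^{(m+1)n}$---is what causes $\beta_p$ to appear in an inequality whose left-hand side involves $\beta_q$. The case $n=1$, where the angular factor $\rho^{n-1}=1$ cannot help cushion the endpoint behavior of the singular integral, motivates the additional hypothesis $\mu\geq m$ to keep the parameter ranges consistent, in particular to ensure that $\beta_p\geq 1-\mu$ so that the non-negativity of $\mathcal{E}_{0,\beta_p,m}(u_0)$ and $\mathcal{E}_{1,\beta_p,m}(u_0,u_1)$ required by Lemma \ref{le4.4} continues to hold.
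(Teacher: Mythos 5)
Your proposal is correct and follows essentially the same route as the paper: Hölder's inequality applied to the two terms produced by Lemma \ref{le4.4}, the boundedness of $\psi_{\beta_q}$ from Lemma \ref{le}(i) for the pointwise term, and the asymptotics of $\psi_{\beta_q}'$ near $z=1$ together with the identity $\frac{(m+1)(n-2)-\mu+1-2\beta_q}{2(m+1)}p'=-p'/q'$ and the radial integral $\int_0^{r(s)}(1-\rho)^{-p'/q'}\,d\rho$ for the time-integrated term, with the cases $q>p$ and $q=p$ yielding the extra power $s^{\beta_q-\beta_p}$ and the $(\log s)^{1/p'}$ factor respectively. The bookkeeping that converts $-\beta_q$ into $-\beta_p$ is exactly the computation the paper performs.
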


\begin{remark} The proof of Lemma \ref{le4.5}  is based on  Lemmas \ref{le} and \ref{le4.4}, so we need to guarantee that
\begin{equation*}\label{check}
\vert z\vert=\frac{(m+1)^2\vert x\vert^2}{t^{2(m+1)}}<1,
\end{equation*}
\begin{equation}\label{check1}
\beta_p=\frac{(m+1)n-\mu+1}{2}-\frac{m+1}{p}\in(\frac{1+\sqrt\delta-\mu}{2},\frac{(m+1)n-\mu+1}{2})
\end{equation}
and
\begin{equation}\label{check2}
\beta_p\geq1-\mu.
\end{equation}
Indeed, the finite propagation speed of waves implies
\begin{equation}\label{finitespeed}
supp\ \text{u}(t,\cdot)\subset\{x: \vert x\vert\leq\phi(t)-\phi(1)+M\},
\end{equation}
note that  $M<\frac{1}{m+1}=\phi(1)$, so $\vert x\vert\leq\frac{t^{m+1}}{m+1}-\frac{1}{m+1}+M<\frac{t^{m+1}}{m+1}$, then $\vert z\vert=\frac{(m+1)^2\vert x\vert^2}{t^{2(m+1)}}<1$. In addition, (\ref{check1})  is equal to $p>\frac{2(m+1)}{(m+1)n-\sqrt\delta}$.
Hence we  need to check (\ref{check2}). In fact,
\begin{equation*}
\begin{aligned}
\beta_p\geq 1-\mu\iff\frac{2(m+1)}{p}\leq(m+1)n+\mu-1.
\end{aligned}
\end{equation*}
Since $p=p_S(n+\frac{\mu}{m+1},m)$, by (\ref{Lambdaa})  we have
\begin{equation*}
\frac{2(m+1)}{p}=p[(m+1)n+\mu-1]-[(m+1)(n-2)+\mu+3]
\end{equation*}
and
\begin{equation*}
\resizebox{0.88\hsize}{!}{$p=\frac{(m+1)(n-2)+\mu+3+\sqrt{[(m+1)(n-2)+\mu+3]^2+8(m+1)[(m+1)n+\mu-1]}}{2[(m+1)n+\mu-1]}, $}
\end{equation*}
then
\begin{align}
&\frac{2(m+1)}{p}\leq(m+1)n+\mu-1\notag\\
&\iff p[(m+1)n+\mu-1]-[(m+1)(n-2)+\mu+3]\leq(m+1)n+\mu-1\notag\\
&\iff(m+1)^2(n^2-2n)+2(m+1)(n\mu+1-\mu)+\mu^2-1\geq0,\label{equal}
\end{align}
this holds obviously for any $n\geq 2$. And when $n=1$,
\begin{equation*}
(\ref{equal}) \iff-(m+1)^2+2(m+1)+\mu^2-1\geq0\iff\mu^2\geq m^2\iff \mu\geq m.
\end{equation*}
\end{remark}

Now we give the proof of Lemma \ref{le4.5}.
\renewcommand{\proofname}{\it  Proof }
\begin{proof}
According to the assumptions on $q$ in (i) and (ii), we know that $\beta_q\geq 1-\mu$ and $\beta_q\in(\frac{1+\sqrt\delta-\mu}{2},\frac{(m+1)n-\mu+1}{2})$.  By Lemma \ref{le4.4}, we have
\begin{equation*}
\begin{aligned}
\varepsilon &\mathcal{E}_{0,\beta_q,m}(u_0)+\varepsilon \mathcal{E}_{1,\beta_q,m}(u_0,u_1)(t-1)+\int_1^t(t-s)H_{\beta_q}(s)ds\\
&=\int_{\mathbb{R}^n}u(t,x)\Phi_{\beta_q}(t,x)dx+\int_1^ts^{-\beta_q}\int_{\mathbb{R}}u(s,x)\overline{\psi_{\beta_q}}\bigl(\frac{(m+1)^2\vert x\vert^2}{s^{2(m+1)}}\bigr)dxds\\
&:=\Omega_{1}(t)+\int_1^t\Omega_{2}(s)ds,
\end{aligned}
\end{equation*}
where
\begin{equation*}
\begin{aligned}
&\Omega_{1}(t)=\int_{\mathbb{R}^n}u(t,x)\Phi_{\beta_q}(t,x)dx,\\
&\Omega_{2}(t)=t^{-\beta_q}\int_{\mathbb{R}}u(t,x)\overline{\psi}_{\beta_q}\bigl(\frac{(m+1)^2\vert x\vert^2}{t^{2(m+1)}}\bigr)dx
\end{aligned}
\end{equation*}
with $z=\frac{(m+1)^2\vert x\vert^2}{t^{2(m+1)}}$.

H$\ddot{\text{o}}$lder$^{\prime}$s inequality implies that
\begin{equation}\label{I1}
\begin{aligned}
\Omega_{1}(t)\leq\bigl(\int_{\mathbb{R}^n}\vert u(t,x)\vert^pdx\bigr)^{\frac{1}{p}}\bigl(\int_{\mathbb{R}^n}\vert \Phi_{\beta_q}(t,x)\vert^{p^{\prime}}dx\bigr)^{\frac{1}{p^{\prime}}}.
\end{aligned}
\end{equation}
By (i) of Lemma \ref{le},
\begin{equation*}
\begin{aligned}
\Phi_{\beta_q}(t,x)=t^{-\beta_q+1}\psi_{\beta_q}(z)\leq Ct^{-\beta_q+1},
\end{aligned}
\end{equation*}
so
\begin{equation*}
\begin{aligned}
\Omega_{1}(t)&\leq C\Vert u(t,\cdot)\Vert_{L^p(\mathbb{R}^n)}t^{-\beta_q+1}\bigl(\int_{\vert x\vert\leq\phi(t)-\phi(1)+M}dx\bigr)^{\frac{1}{p^{\prime}}}\\
                          &\leq C t^{-\beta_q+1+\frac{(m+1)n}{p^{\prime}}}\Vert u(t,\cdot)\Vert_{L^p(\mathbb{R}^n)}.
\end{aligned}
\end{equation*}

By H$\ddot{\text{o}}$lder$^{\prime}$s inequality,
\begin{equation*}
\begin{aligned}
\Omega_{2}(t)&\leq t^{-\beta_q}\Vert u(t,\cdot)\Vert_{L^p(\mathbb{R}^n)}\Bigl[\int_{\mathbb{R}^n}\bigl(\overline{\psi}_{\beta_q}(z)\bigr)^{p^\prime}dx\Bigr]^{\frac{1}{p^{\prime}}},
\end{aligned}
\end{equation*}
by  Lemma \ref{le}, $\vert (2\beta+\mu-2)\psi_{\beta_q}(z)\vert\leq C$ and
\begin{equation*}
\begin{aligned}
\vert\psi_{\beta_q}^{\prime}(z)\vert\leq C(1-\sqrt z)^{\frac{(m+1)(n-2)-\mu+1-2\beta_q}{2(m+1)}},
\end{aligned}
\end{equation*}
then
\begin{equation*}
\begin{aligned}
&\vert\overline{\psi}_{\beta_q}(z\bigr)\vert=\vert (2\beta+\mu-2)\psi_{\beta_q}(z)+4(m+1)z\psi_{\beta_q}^{\prime}(z)\vert\\
&\leq C \Bigl(1-\frac{(m+1)\vert x\vert}{t^{m+1}}\Bigr)^{\frac{(m+1)(n-2)-\mu+1-2\beta_q}{2(m+1)}},
\end{aligned}
\end{equation*}
so
\begin{equation}\label{I2}
\resizebox{   0.9\hsize}{!}{$  \begin{aligned}
&\Omega_{2}(t)\leq C t^{-\beta_q}\Vert u(t,\cdot)\Vert_{L^p(\mathbb{R}^n)}\Bigl[ \int_{\vert x\vert\leq \phi(t)-\phi(1)+M}\Bigl(1-\frac{(m+1)\vert x\vert}{t^{m+1}}\Bigr)^{\frac{(m+1)(n-2)-\mu+1-2\beta_q}{2(m+1)}p^\prime}dx\Bigr]^{\frac{1}{p^{\prime}}}\\
                           &\quad=C t^{-\beta_q}\Vert u(t,\cdot)\Vert_{L^p(\mathbb{R}^n)}\Bigl( \int_0^{\frac{(m+1)(\phi(t)-\phi(1)+M)}{t^{m+1}}}\bigl(1-\rho\bigr)^{-\frac{p^\prime}{q^\prime}}\bigl( \frac{t^{m+1}}{m+1}\rho\bigr)^{n-1}\frac{t^{m+1}}{m+1}d\rho\Bigl)^{\frac{1}{p^\prime}}\\
                           &\quad\leq Ct^{-\beta_q+\frac{(m+1)n}{p^\prime}}\Vert u(t,\cdot)\Vert_{L^p(\mathbb{R}^n)}\Bigl( \int_0^{\frac{(m+1)(\phi(t)-\phi(1)+M)}{t^{m+1}}}\bigl(1-\rho\bigr)^{-\frac{p^\prime}{q^\prime}}d\rho\Bigl)^{\frac{1}{p^\prime}},
\end{aligned}      $}
\end{equation}
here we calculated that $\frac{(m+1)(n-2)-\mu+1-2\beta_q}{2(m+1)}p^{\prime}=-\frac{p^{\prime}}{q^{\prime}}$ and $\rho\leq 1$  because of $M\leq\frac{1}{m+1}=\phi(1),$
\begin{equation*}
\begin{aligned}
\rho=\frac{(m+1)r}{t^{m+1}}=\frac{(m+1)\vert x\vert}{t^{m+1}}\leq(m+1)\frac{\phi(t)-\phi(1)+M}{t^{m+1}}\leq (m+1)\frac{\phi(t)}{t^{m+1}}=1.
\end{aligned}
\end{equation*}
When $q>p$, we have $-\frac{p^{\prime}}{q^{\prime}}<-1$, so
\begin{equation*}
\begin{aligned}
\int_0^{\frac{(m+1)(\phi(t)-\phi(1)+M)}{t^{m+1}}}&\bigl(1-\rho\bigr)^{-\frac{p^\prime}{q^\prime}}d\rho\leq C\Bigl(1-\frac{(m+1)(\phi(t)-\phi(1)+M)}{t^{m+1}}\Bigr)^{1-\frac{p^{\prime}}{q^{\prime}}}\\
                                                                                                   &=C \Bigl( \frac{1-M(m+1)}{t^{m+1}}\Bigl)^{1-\frac{p^{\prime}}{q^{\prime}}}\leq C \Bigl( \frac{1}{t^{m+1}}\Bigl)^{1-\frac{p^{\prime}}{q^{\prime}}};
\end{aligned}
\end{equation*}
while $q=p$,
\begin{equation*}
\begin{aligned}
\int_0^{\frac{(m+1)(\phi(t)-\phi(1)+M)}{t^{m+1}}}&\bigl(1-\rho\bigr)^{-\frac{p^\prime}{q^\prime}}d\rho=\int_0^{\frac{(m+1)(\phi(t)-\phi(1)+M)}{t^{m+1}}}\frac{1}{1-\rho}d\rho\\
&=\log\frac{t^{m+1}}{1-(m+1)M}\leq C\log t.
\end{aligned}
\end{equation*}
Then,
\begin{equation}\label{I22}
\Omega_{2}(t) \leq
  \left\{
\begin{aligned}
&C t^{-\beta_p+\frac{(m+1)n}{p^{\prime}}}\Vert u(t,\cdot)\Vert_{L^p(\mathbb{R}^n)},\quad &q>p,\\
&C t^{-\beta_q+\frac{(m+1)n}{p^{\prime}}}\bigl(\log t\bigr)^{\frac{1}{p^\prime}}\Vert u(t,\cdot)\Vert_{L^p(\mathbb{R}^n)},\quad &q=p.
\end{aligned}
 \right.
  \end{equation}
Therefore,
\begin{equation}\label{I23}
\int_1^t\Omega_{2}(s)ds \leq
  \left\{
\begin{aligned}
&C \int_1^ts^{-\beta_p+\frac{(m+1)n}{p^{\prime}}}\Vert u(s,\cdot)\Vert_{L^p(\mathbb{R}^n)}ds,\quad &q>p,\\
&C \int_1^ts^{-\beta_p+\frac{(m+1)n}{p^{\prime}}}\bigl(\log s\bigr)^{\frac{1}{p^\prime}}\Vert u(s,\cdot)\Vert_{L^p(\mathbb{R}^n)}ds,\quad &q=p.
\end{aligned}
 \right.
  \end{equation}

Consequently, (\ref{E21}) and (\ref{E22}) follow from  (\ref{I1}), (\ref{I23}) and the nonnegativity of $\mathcal{E}_{0,\beta,m}(u_0)$ and $\mathcal{E}_{1,\beta,m}(u_0,u_1)$.
\end{proof}

\subsection{The proof of Theorem \ref{theorem2}.} Now, we will derive a second-order ordinary differential inequality satisfied by $J_\beta(t)$, and then utilize the following result  proved by Zhou \cite{Zhou2014} to obtain the   lifespan estimate.
\begin{lemma}\label{le4.7} (\cite{Zhou2014})
Let $\sigma_0\geq0$ and $U\in C^2\bigl([\sigma_0, \infty)\bigr)$ be nonnegative function. Assume there exists positive numbers $c$, $C$ and $C^\prime$ such that
\begin{equation}\label{odr2}
\left\{
\begin{aligned}
& U^{\prime\prime}(\sigma)+2U^{\prime}(\sigma)\geq c\sigma^{1-p}\bigl(U(\sigma)\bigr)^p,\\
&U(\sigma)\geq C\varepsilon^p\sigma,\\
&U^{\prime}(\sigma)\geq C^\prime \varepsilon^p.
\end{aligned}
 \right.
 \end{equation}
Then there exists a  $\varepsilon_0>0$ such that for any $\varepsilon\in(0,\varepsilon_0)$, $U$ blows up before $C^{\prime\prime}\varepsilon^{-p(p-1)}$ for some $C^{\prime\prime}>0$.
\end{lemma}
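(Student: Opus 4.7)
The plan is a Kato-type iteration on the damped ODE inequality, producing polynomial lower bounds $U(\sigma) \geq D_k \sigma^{a_k}$ whose coefficients $D_k$ blow up doubly exponentially in $k$, combined with a standard ODE blow-up argument once $U$ is sufficiently large.

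First I would eliminate the damping by the integrating factor $e^{2\sigma}$. The first inequality in \eqref{odr2} becomes $(e^{2\sigma} U'(\sigma))' \geq c e^{2\sigma} \sigma^{1-p} U(\sigma)^p$. Dropping the nonnegative boundary contribution (justified by $U'(\sigma_0) \geq 0$), dividing by $e^{2\sigma}$, integrating once more on $[\sigma_0, \sigma]$ and swapping the order of integration, then using the lower bound $\int_\tau^\sigma e^{2(s-\sigma)}\,ds \geq \tfrac{1}{2}(1-e^{-2})$ whenever $\tau \leq \sigma - 1$, I would arrive at the Duhamel-type inequality
\[
U(\sigma) \geq c_1 \int_{\sigma_0}^{\sigma-1} \tau^{1-p} U(\tau)^p\, d\tau, \qquad c_1 = \tfrac{c}{2}(1-e^{-2}),\ \sigma \geq \sigma_0 + 1,
\]
which no longer contains the damping.

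Next I iterate starting from $U(\sigma) \geq D_0 \sigma^{a_0}$ with $D_0 = C\varepsilon^p$ and $a_0 = 1$, as supplied by the second line of \eqref{odr2}. Plugging a polynomial lower bound $U(\sigma) \geq D_k \sigma^{a_k}$ (valid for $\sigma \geq \tilde\sigma_k$) into the integral inequality and performing the elementary computation, I would choose $\tilde\sigma_{k+1}$ of the order of $\max(2\tilde\sigma_k, 4 a_{k+1})$ so that both the boundary term $\tilde\sigma_k^{a_{k+1}}$ and the shift $\sigma - 1$ are absorbed into a fixed fraction of $\sigma^{a_{k+1}}$. This produces the recursions
\[
a_{k+1} = 2 - p + p a_k, \qquad D_{k+1} \geq \frac{c_5 D_k^p}{a_{k+1}}, \qquad a_k = \frac{p^k + p - 2}{p-1}\sim \frac{p^k}{p-1}.
\]
Taking logarithms and iterating $\log D_{k+1} \geq p \log D_k + \log c_5 - \log a_{k+1}$ yields
\[
\log D_k \geq p^k \log D_0 + p^k \sum_{j=1}^{k} p^{-j}(\log c_5 - \log a_j).
\]
Since $\log a_j \sim j \log p$ and $\sum_{j \geq 1} j p^{-j} = p/(p-1)^2 < \infty$ for $p > 1$, the correction sums to $-Mp^k + O(1)$ for a finite constant $M = M(p, c_5)$ independent of $\varepsilon$, giving $\log D_k \geq p^k(\log D_0 - M) + O(1)$.

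Combining with $a_k\log\sigma \geq \tfrac{p^k\log\sigma}{p-1} + O(1)$ yields
\[
\log\bigl(D_k \sigma^{a_k}\bigr) \geq \frac{p^k}{p-1}\Bigl[(p-1)(\log D_0 - M) + \log\sigma\Bigr] + O(1).
\]
Substituting $\log D_0 = p\log\varepsilon + \log C$, the bracket is positive precisely for $\sigma > \sigma_* := C_6 \varepsilon^{-p(p-1)}$. For any such $\sigma$ lying within the $C^2$-domain of $U$, choosing $k$ as large as permitted by $\tilde\sigma_k \leq \sigma$ produces $U(\sigma) \geq \exp(\theta p^k)$ with some $\theta > 0$, a value so large that the damping term $2U'(\sigma)$ is negligible against the forcing $c\sigma^{1-p}U(\sigma)^p$. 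The inequality then reduces to $U'(\sigma) \geq (c/2)\sigma^{1-p} U(\sigma)^p$, and separation of variables shows that $U$ blows up within $O\bigl(\sigma_*^{2-p}U(\sigma_*)^{-(p-1)}\bigr)$ of $\sigma_*$, which is negligible. Hence the lifespan is bounded by $C''\varepsilon^{-p(p-1)}$.

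\textbf{Main obstacle.} The sharp exponent $p(p-1)$ in $\sigma_* \sim \varepsilon^{-p(p-1)}$ emerges only because of the absorption of the $-\log a_{k+1}$ corrections into a single constant $M$ multiplying $p^k$. A naive bound $\log D_k \geq p^k \log D_0 - C k p^k$ would yield $\sigma_* \sim \varepsilon^{-p}$, the sub-critical exponent. The saving fact $\sum_{j \geq 1} j p^{-j} < \infty$ is the algebraic heart of the Strauss critical lifespan $\exp(\varepsilon^{-p(p-1)})$ in the original PDE.
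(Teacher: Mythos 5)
The paper does not prove this lemma at all --- it is quoted verbatim from Zhou--Han \cite{Zhou2014} --- so there is no internal proof to compare against; your proposal has to stand on its own. On its own it is essentially sound, and it follows what is in fact the standard route for such ``critical Kato'' lemmas: the integrating factor $e^{2\sigma}$, the Duhamel-type inequality $U(\sigma)\geq c_1\int_{\sigma_0}^{\sigma-1}\tau^{1-p}U(\tau)^p\,d\tau$ (note that the loss of the factor $(\sigma-\tau)$ caused by the damping is exactly what turns the recursion into $a_{k+1}=pa_k+2-p$ and hence the threshold into $\varepsilon^{-p(p-1)}$ rather than $\varepsilon^{-p(p-1)/2}$), the polynomial iteration with doubly exponential coefficients, and the summability $\sum_j j p^{-j}<\infty$ that lets the $-\log a_{k+1}$ corrections be absorbed into a single constant $M$ multiplying $p^k$. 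Your accounting of the recursion for $a_k$ and $\log D_k$, the role of the hypothesis $U'\geq C'\varepsilon^p$ (dropping the boundary term and monotonicity), and the resulting threshold $\sigma_*\sim\varepsilon^{-p(p-1)}$ are all correct.

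The one step you should tighten is the endgame. From the second-order inequality $U''+2U'\geq c\sigma^{1-p}U^p$ you cannot simply declare $2U'$ negligible and pass to the first-order inequality $U'\geq (c/2)\sigma^{1-p}U^p$ to separate variables: largeness of $U$ does not by itself control the sign or size of $U''$ relative to $U'$. The standard fix is to stay at second order: on an interval $[\sigma_1,\sigma_1+1]$ with $\sigma_1\approx 2\sigma_*$ and $U(\sigma_1)=A$ enormous, reuse the Duhamel form to get $U(\sigma)\geq A+c'\sigma_1^{1-p}\int_{\sigma_1}^{\sigma}(\sigma-\tau)U(\tau)^p\,d\tau$ (using $\tfrac12(1-e^{-2x})\geq xe^{-2x}$), so that the minorant $h$ satisfies $h''\geq c'\sigma_1^{1-p}h^p$ with $h(\sigma_1)=A$, $h'(\sigma_1)=0$; multiplying by $h'$ and integrating gives blow-up of $h$ within a time $O\bigl(\sigma_1^{(p-1)/2}A^{-(p-1)/2}\bigr)\ll 1$. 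This is a repair of phrasing rather than of substance, and with it your argument delivers the stated conclusion with the sharp exponent.
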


Now we prove Theorem \ref{theorem2}.

\begin{proof}
For $\sigma>0$,  $\beta_{p+\sigma}=\frac{(m+1)n-\mu+1}{2}-\frac{m+1}{p+\sigma}\in(\frac{1+\sqrt\delta-\mu}{2},\frac{(m+1)n-\mu+1}{2})$ and $\beta_{p+\sigma}\geq 1-\mu$.
From (\ref{conju2}) and (\ref{G}), by Lemma \ref{le}, there exists a $C>0$ such that
\begin{equation*}
C^{-1}t^{(\beta_p-1)\frac{1}{p}}\bigl(H_{\beta_p}(t)\bigr)^{\frac{1}{p}}\leq\Vert u(t,\cdot)\Vert_{L^p(\mathbb{R}^n)}\leq Ct^{(\beta_p-1)\frac{1}{p}}\bigl(H_{\beta_p}(t)\bigr)^{\frac{1}{p}}.
\end{equation*}
Note
\begin{equation*}
\begin{aligned}
-\beta_{p+\sigma}+1+\frac{(m+1)n}{p^\prime}+\frac{1}{p}(\beta_p-1)=\frac{(m+1)n-\beta_p+1}{p^\prime}-1
\end{aligned}
\end{equation*}
and
\begin{equation*}
\begin{aligned}
&\frac{(m+1)n-\beta_p+1}{p^\prime}=\frac{1}{p}+1\\
&+\frac{1}{p^2}\frac{\bigl((m+1)n-1+\mu\bigr)p^2-\bigl((m+1)(n-2)+\mu+3 \bigr)p-2(m+1)}{2}=\frac{1}{p}+1,\\
\end{aligned}
\end{equation*}
it follows from  (i) of Lemma \ref{le4.5} that
\begin{equation}\label{E11}
\begin{aligned}
&\varepsilon\mathcal{E}_{0,\beta_{p+\sigma},m}(u_0)+\varepsilon\mathcal{E}_{1,\beta_{p+\sigma},m}(u_0,u_1)(t-1)\\
&\quad\leq C t^{-\beta_{p+\sigma}+1+\frac{(m+1)n}{p^\prime}}\Vert u(t,\cdot)\Vert_{L^p(\mathbb{R}^n)}+C\int_1^ts^{-\beta_p+\frac{(m+1)n}{p^\prime}}\Vert u(s,\cdot)\Vert_{L^p(\mathbb{R}^n)}ds\\
&\quad\leq Ct^{-\beta_{p+\sigma}+1+\frac{(m+1)n}{p^\prime}+(\beta_p-1)\frac{1}{p}}\bigl(H_{\beta_p}(t)\bigr)^{\frac{1}{p}}\\
&\quad\quad\quad+C\int_1^ts^{-\beta_p+\frac{(m+1)n}{p^\prime}+(\beta_p-1)\frac{1}{p}}\bigl(H_{\beta_p}(t)\bigr)^{\frac{1}{p}}ds\\
&\quad=Ct^{1+\frac{1}{p}+\beta_p-\beta_{p+\sigma}}
\bigl(H_{\beta_p}(t)\bigr)^{\frac{1}{p}}+C\int_1^ts^{\frac{1}{p}}
\bigl(H_{\beta_p}(t)\bigr)^{\frac{1}{p}}ds.
\end{aligned}
\end{equation}
Integrating (\ref{E11}) on $[1,t]$ and using  H$\ddot{\text{o}}$lder$^{\prime}s$ inequality, we have
\begin{equation}\label{E12}
\begin{aligned}
&\varepsilon\mathcal{E}_{0,\beta_{p+\sigma},m}(u_0)(t-1)+\varepsilon\mathcal{E}_{1,\beta_{p+\sigma},m}(u_0,u_1)(\frac{t^2}{2}-t+\frac{1}{2})\\
&\qquad\leq C\int_1^ts^{1+\frac{1}{p}+\beta_p-\beta_{p+\sigma}}\bigl(H_{\beta_p}(s)\bigr)^{\frac{1}{p}}ds+C\int_1^t(t-s)s^{\frac{1}{p}}\bigl(H_{\beta_p}(s)\bigr)^{\frac{1}{p}}ds\\
&\qquad\leq C\Bigl(\int_1^tsH_{\beta_p}(s)ds\Bigr)^{\frac{1}{p}}(t^{p^\prime+1})^{\frac{1}{p^\prime}}=Ct^{1+\frac{1}{p^\prime}}\bigl(I_{\beta_p}^{\prime}(t)\bigr)^{\frac{1}{p}},
\end{aligned}
\end{equation}
which implies that
\begin{equation}\label{E13}
\begin{aligned}
I_{\beta_p}^{\prime}(t)&\geq Ct^{-p(2-\frac{1}{p})}\varepsilon^p\Bigl(\mathcal{E}_{0,\beta_{p+\sigma},m}(u_0)(t-1)+\mathcal{E}_{1,\beta_{p+\sigma},m}(u_0,u_1)\frac{t^2-2t+1}{2}\Bigr)^{p}\\
                             &\geq C\varepsilon^pt, \quad \forall t>1,
\end{aligned}
\end{equation}
then
\begin{equation}\label{E13}
\begin{aligned}
I_{\beta_p}(t)\geq C\varepsilon^pt^2, \quad \forall t>1.
\end{aligned}
\end{equation}
By the definition of $J_{\beta_p}(t)$, $(1+t)^3J_{\beta_p}^\prime(t)=I_{\beta_p}(t)$, so
\begin{equation}\label{57}
\begin{aligned}
&J_{\beta_p}^\prime(t)\geq C(1+t)^{-3}\varepsilon^pt^2\geq C(1+t)^{-1}\varepsilon^p,
\end{aligned}
\end{equation}
then
\begin{equation}\label{56}
\begin{aligned}
J_{\beta_p}(t)\geq C\varepsilon^p\log (1+t).
\end{aligned}
\end{equation}

By (ii) of Lemma \ref{le4.5},
\begin{equation}\label{E14}
\begin{aligned}
&\int_1^t(t-s)H_{\beta_p}(s)ds\leq Ct^{-\beta_p+1+\frac{(m+1)n}{p^{\prime}}}\Vert u(t,\cdot)\Vert_{L^{p}(\mathbb{R}^n)}\\
                    &\qquad\quad\quad+C\int_1^ts^{\frac{(m+1)n}{p^\prime}-\beta_p}(\log s)^{\frac{1}{p^\prime}}\Vert u(s,\cdot)\Vert_{L^{p}(\mathbb{R}^n)}ds\\
                    &\qquad\leq Ct^{-\beta_p+1+\frac{(m+1)n}{p^{\prime}}+\frac{\beta_p-1}{p}}\bigl(H_{\beta_p}(t)\bigr)^{\frac{1}{p}}\\
                     &\quad\quad\quad +C\int_1^ts^{\frac{(m+1)n}{p^\prime}-\beta_p}(\log s)^{\frac{1}{p^\prime}}s^{\frac{\beta_p-1}{p}}\bigl(H_{\beta_p}(s)\bigr)^{\frac{1}{p}}ds\\
                     &\qquad=Ct^{1+\frac{1}{p}}\bigl(H_{\beta_p}(t)\bigr)^{\frac{1}{p}}+C\int_1^ts^{\frac{1}{p}}(\log s)^{\frac{1}{p^\prime}}\bigl(H_{\beta_p}(s)\bigr)^{\frac{1}{p}}ds.
\end{aligned}
\end{equation}
Integrating (\ref{E14}) on $[1,t]$ and using H$\ddot{\text{o}}$lder$^{\prime}s$ inequality give that
\begin{equation}\label{E15}
\begin{aligned}
&\frac{1}{2}\int_1^t(t-s)^2H_{\beta_p}(s)ds\\
&\quad\leq C\int_1^ts^{1+\frac{1}{p}}\bigl(H_{\beta_p}(s)ds\bigr)^{\frac{1}{p}}+C\int_1^t(t-s)s^{\frac{1}{p}}(\log s)^{\frac{1}{p^\prime}}\bigl(H_{\beta_p}(s)\bigr)^{\frac{1}{p}}ds\\
&\quad\leq C(I_{\beta_p}^\prime(t))^{\frac{1}{p}}(1+t)^{\frac{2p-1}{p}}\bigl(\log (t+1)\bigr)^{\frac{1}{p^\prime}},
\end{aligned}
\end{equation}
then Lemma \ref{le4.1} shows that
\begin{equation*}
\begin{aligned}
t^2J_{\beta_p}(t)\leq\frac{1}{2}\int_1^t(t-s)^2H_{\beta_p}(s)ds\leq C(I_{\beta_p}^\prime(t))^{\frac{1}{p}}(1+t)^{\frac{2p-1}{p}}\bigl(\log (t+1)\bigr)^{\frac{1}{p^\prime}},
\end{aligned}
\end{equation*}
i.e.,
\begin{equation}\label{54}
\begin{aligned}
\bigl(\log (t+1)\bigr)^{1-p}\bigl(J_{\beta_p}(t)\bigr)^p&\leq CI_{\beta_p}^\prime(t)(1+t)^{2p-1}t^{-2p}\leq CI_{\beta_p}^\prime(t)(1+t)^{-1}.
\end{aligned}
\end{equation}
By the definition of $J_{\beta_p}(t)$,
\begin{equation}\label{55}
3(1+t)^2J_{\beta_p}^\prime(t)+(1+t)^3J_{\beta_p}^{\prime\prime}(t)=I_{\beta_p}^\prime(t),
\end{equation}
combining (\ref{54}), we have
\begin{equation}\label{55}
\begin{aligned}
\bigl(\log (t+1)\bigr)^{1-p}\bigl(J_{\beta_p}(t)\bigr)^p\leq 3C(1+t)J_{\beta_p}^\prime(t)+C(1+t)^2J_{\beta_p}^{\prime\prime}(t).
\end{aligned}
\end{equation}

Let $1+t=e^{\tau}$ and $J_{0}(\tau)=J_{\beta_p}(t)=J_{\beta_p}(e^\tau-1)$, then

\begin{equation}\label{56}
  \left\{
\begin{aligned}
&J_0^{\prime}(\tau)=J_{\beta_p}^\prime(t)(1+t),\\
&J_0^{\prime\prime}(\tau)=J_{\beta_p}^{\prime\prime}(t)(1+t)^2+J_{\beta_p}^\prime(t)(1+t).
\end{aligned}
 \right.
  \end{equation}
It follows from (\ref{57}), (\ref{55}) and  (\ref{56})  that

\begin{equation}\label{odr1}
\left\{
\begin{aligned}
&J_0^{\prime\prime}(\tau)+2J_0^{\prime}(\tau)\geq C\bigl(J_0(\tau)\bigr)^p\tau^{1-p},\\
&J_0(\tau)\geq C\varepsilon^p\tau,\\
&J_0^{\prime}(\tau)\geq C\varepsilon^p,
\end{aligned}
 \right.
 \end{equation}
by (ii) of Lemma \ref{le4.7} , we deduced that $J_0(\tau)$ blows up before $\tau=C\varepsilon^{-p(p-1)}$, so $J_{\beta_p}(t)$ blows up before $t=e^{C\varepsilon^{-p(p-1)}}$.

\end{proof}

\section{The Proof of Theorem \ref{theorem3}}
\subsection{Simplification of the problem}

In the case of $\delta=1$, we claim that when $n\geq 3$, $p_S(n+\frac{\mu}{m+1},m)>p_F((m+1)n+\frac{\mu-1-\sqrt\delta}{2})$ holds for any $m\geq0$; and for $n=2$, $p_S(n+\frac{\mu}{m+1},m)>p_F((m+1)n+\frac{\mu-1-\sqrt\delta}{2})$ holds if and only if $\mu\in [0, a(m))$, where $a(m)$ is the positive root of the quadratic equation
\begin{equation}\label{ineqequal1}
\eta^2-[8(m+1)^3-8(m+1)^2+2(m+1)-1]\eta +8(m+1)-8(m+1)^2-2=0.
\end{equation}
Indeed,
\begin{align}
&p_S(n+\frac{\mu}{m+1},m)>p_F((m+1)n+\frac{\mu-1-\sqrt\delta}{2})\nonumber\\
&\iff\resizebox{0.88\hsize}{!}{$\frac{(m+1)(n-2)+\mu+3+\sqrt{[(m+1)(n-2)+\mu+3]^2+8(m+1)[(m+1)n+\mu-1]}}{2[(m+1)n+\mu-1]} $}\nonumber\\
&\quad\quad>1+\frac{2}{(m+1)n+\frac{\mu}{2}-1}\label{ineqequal2}\\
&\iff[(m+1)n-2(m+1)-1]\mu^2\nonumber\\
&\qquad\quad+[2(m+1)^3n^2+(m+1)^2n^2-6(m+1)^2n-2(m+1)n+6(m+1)-1]\mu\nonumber\\
&\qquad\quad+2(m+1)^3n^3-4(m+1)^3n^2-2(m+1)^2n^2+8(m+1)^2n\nonumber\\
&\qquad\quad\quad-2(m+1)n-4(m+1)+2>0.\nonumber
\end{align}
We find that
\begin{equation*}
\begin{aligned}
&(m+1)n-2(m+1)-1>0,\\
&2(m+1)^3n^2+(m+1)^2n^2-6(m+1)^2n-2(m+1)n+6(m+1)-1>0
\end{aligned}
\end{equation*}
and
\begin{equation*}
\begin{aligned}
&2(m+1)^3n^3-4(m+1)^3n^2-2(m+1)^2n^2+8(m+1)^2n\\
&\quad\quad\quad-2(m+1)n-4(m+1)+2>0
\end{aligned}
\end{equation*}
hold for $n\geq3$ and any $m\geq0$, so (5.2) is obviously valid since $\mu\geq0$. And when $n=2$, it is easy to see that for (\ref{ineqequal2}) to hold, it is sufficient $\mu \in[0,a(m))$, where $a(m)$ is the positive root of (5.1). Therefore,  in view of Theorem  \ref{theorem2}, we only need to consider the cases of $n=2$ with $\mu\geq a(m)$  and $n=1$ with $p=p_F(m+\frac{\mu}{2})$.

We introduce the change of variables
\begin{equation}
u(t,x)=t^{-\frac{\mu}{2}}w(t,x),
\end{equation}
note $\delta=1$, (\ref{eq}) can be transformed into
\begin{equation}\label{changeeq}
  \left\{
\begin{aligned}
&\partial_t^2w-t^{2m}\Delta w=t^{-\frac{\mu}{2}(p-1)}|w|^p,\ &t\geq1, x\in \mathbb{R}^n,\\
&w(1,x)=w_0(x), \ \partial_tw(1,x)=w(1,x),               &x\in\mathbb{R}^n,
\end{aligned}
 \right.
  \end{equation}
where $w_0(x)=\varepsilon u_0(x)$, $w(1,x)=\frac{\mu}{2}\varepsilon u_0(x)+\varepsilon u_1(x)$. Now, we prove that the solution to (\ref{changeeq}) blows up in finite time, and consequently, so does (\ref{eq}).

\subsection{The Proof of Theorem \ref{theorem3}}The proof is based on the following ordinary differential inequalities, which is called Kato$^\prime$s lemma.
\begin{lemma}(\cite{Zh2006})\label{katolemma}
Let $p>1$, $a\geq1$,  and $(p-1)a=q-2$. Supposed $F\in C^2[0,T)$ satisfies, when $t\geq T_0>0$,\\
(a) $F(t)\geq K_0(t+R)^a$,\\
(b) $F^{\prime\prime}(t)\geq K_1(t+R)^{-q}F^p(t)$,\\
with some positive constants $K_0, K_1, T_0$ and $R$. Fixing $K_1$, there exists a positive constant $c_0$, independent of $R$ and $T_0$ such that if $K_0\geq c_0$, then $T<\infty.$
\end{lemma}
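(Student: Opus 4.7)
The plan is to exploit the scaling constraint $(p-1)a=q-2$ by introducing the self-similar change of variables $s=\log(t+R)$ and $h(s)=(t+R)^{-a}F(t)$. A direct chain-rule computation (using $F'(t)=(t+R)^{a-1}(ah+h')$ and $F''(t)=(t+R)^{a-2}[h''+(2a-1)h'+a(a-1)h]$) together with $pa-q=a-2$ converts hypothesis (b) into the autonomous second-order differential inequality
\begin{equation*}
h''(s)+(2a-1)h'(s)+a(a-1)h(s)\ge K_1\,h(s)^p,\qquad s\ge s_0:=\log(T_0+R),
\end{equation*}
while hypothesis (a) becomes the pointwise lower bound $h(s)\ge K_0$ on the same range. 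Since a finite blow-up $s_\ast<\infty$ in the $s$-variable corresponds to $t_\ast=e^{s_\ast}-R<\infty$, it suffices to show that $h$ blows up in finite $s$-time whenever $K_0$ exceeds some threshold depending only on $a,p,K_1$.

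First I would absorb the zeroth-order term: choosing $c_0\ge(2a(a-1)/K_1)^{1/(p-1)}$ forces $a(a-1)h\le\tfrac{K_1}{2}h^p$ whenever $h\ge K_0\ge c_0$, so the inequality simplifies to $h''+(2a-1)h'\ge \tfrac{K_1}{2}h^p$. Multiplying by the integrating factor $e^{(2a-1)s}$ gives $(e^{(2a-1)s}h')'\ge \tfrac{K_1}{2}e^{(2a-1)s}h^p\ge \tfrac{K_1 K_0^p}{2}e^{(2a-1)s}$; integrating from $s_0$ drives $h'(s)$ toward $\tfrac{K_1 K_0^p}{2(2a-1)}$ up to an exponentially decaying correction in $h'(s_0)$, so there exists a finite $s_1\ge s_0$ past which $h'(s)\ge \tfrac{K_1 K_0^p}{4(2a-1)}$. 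In particular, $h$ is eventually monotone increasing and grows at least linearly in $s$.

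Next I would bootstrap. Substituting a polynomial lower bound $h(s)\ge C(s-s_k)^{\alpha_k}$ back into the integrating-factor inequality and integrating yields a new polynomial bound with exponent $\alpha_{k+1}=p\alpha_k+1$, which iterates to infinity like $(p^k-1)/(p-1)$. After finitely many steps $h$ exceeds any prescribed threshold, in particular one beyond which the friction term $(2a-1)h'$ is dominated by $\tfrac{K_1}{4}h^p$; from that point onward the inequality collapses to $h''\ge \tfrac{K_1}{4}h^p$ with very large Cauchy data. The classical argument — multiply by $h'$ and integrate to obtain $(h')^2\gtrsim h^{p+1}$, then separate variables in $h'\gtrsim h^{(p+1)/2}$ and observe $\int^{\infty}u^{-(p+1)/2}du<\infty$ — shows that $h^{-(p-1)/2}$ reaches zero in finite $s$-time, giving $T<\infty$ as required.

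The main obstacle is handling the first-order friction $(2a-1)h'$: the direct energy identity $\tfrac{d}{ds}\bigl[\tfrac{(h')^2}{2}-\tfrac{K_1}{p+1}h^{p+1}\bigr]=-(2a-1)(h')^2$ merely shows monotonicity of the energy and does not rule out global existence, and a standard integration by parts with the weight $e^{2(2a-1)s}$ causes the leading terms to cancel. The bootstrap circumvents this by first inflating $h$ itself past the threshold $(2(2a-1)/K_1)^{2/(p-1)}$, at which point $K_1 h^{p}$ unconditionally dwarfs the friction. The crucial structural feature is that every threshold appearing in the argument depends only on $a$, $p$, and $K_1$, so the constant $c_0$ produced is indeed independent of $R$ and $T_0$.
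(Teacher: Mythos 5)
The paper itself does not prove this lemma (it is quoted verbatim from \cite{Zh2006}), so your argument can only be judged on its own terms. Your reduction is correct and clean: with $s=\log(t+R)$ and $h(s)=(t+R)^{-a}F(t)$, the identity $pa-q=a-2$ turns (b) into $h''+(2a-1)h'+a(a-1)h\ge K_1h^p$ and (a) into $h\ge K_0$; since $a\ge 1$, the choice $c_0\ge\bigl(2a(a-1)/K_1\bigr)^{1/(p-1)}$ legitimately absorbs the zeroth-order term, the integrating-factor step then yields $h'\ge \frac{K_1K_0^p}{4(2a-1)}>0$ after a finite $s$-time, and the exponent bootstrap $\alpha_{k+1}=p\alpha_k+1$ is sound. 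All thresholds up to that point depend only on $a,p,K_1$, as the lemma requires.

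The gap is in the final step. You assert that once $h$ exceeds a suitable threshold, the friction term $(2a-1)h'$ is dominated by $\tfrac{K_1}{4}h^p$, so that the inequality collapses to $h''\ge\tfrac{K_1}{4}h^p$. But a pointwise lower bound on $h$ gives no upper bound on $h'$: the hypotheses are a one-sided differential inequality, and at a given $s$ the value $h'(s)$ can be arbitrarily large compared with $h(s)^p$, in which case $h''(s)$ may be negative there. Largeness of $h$ controls the zeroth-order term $a(a-1)h$ against $K_1h^p$ (which you did correctly), but it does not control the first-order term, so the deduction ``$h$ large $\Rightarrow$ friction negligible'' is a genuine missing idea. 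The conclusion is still reachable from where you stand, e.g.\ by a dichotomy: if $h'\ge\epsilon h^{(p+1)/2}$ ever holds at a point where $h$ is large, one checks that the region $\{h'>\epsilon h^{(p+1)/2}\}$ is forward-invariant (on its boundary $h'=\epsilon h^{(p+1)/2}$ one has $\frac{d}{ds}\bigl(h'-\epsilon h^{(p+1)/2}\bigr)\ge\bigl(\tfrac{K_1}{2}-\tfrac{(p+1)\epsilon^2}{2}\bigr)h^p-(2a-1)\epsilon h^{(p+1)/2}>0$ for $\epsilon$ small and $h$ large, since $p>\tfrac{p+1}{2}$), whence $h'\ge\epsilon h^{(p+1)/2}$ persists and blows up by separation of variables; if instead $h'<\epsilon h^{(p+1)/2}$ on the whole tail, then the friction really is dominated and your concavity argument applies. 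Alternatively, track the constants $C_k$ in the bootstrap $h\ge C_k(s-\bar s)^{\alpha_k}$ and show they diverge at a fixed finite $s$, which is the iteration route behind \cite{Zh2006} and the closely related Lemma \ref{le4.7}.
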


Let
\begin{equation}\label{F}
F(t)=\int_{\mathbb{R}^n}w(t,x)dx,
\end{equation}
by H$\ddot{\text{o}}$lder$^{\prime}$s inequality and the finite propagation speed of waves,
\begin{equation}
\begin{aligned}
\vert F(t)\vert^p&\leq \int_{\vert x\vert\leq \phi(t)-\phi(1)+M}\vert w(t,x)\vert^pdx\Bigl(\int_{\vert x\vert\leq \phi(t)-\phi(1)+M}dx\Bigr)^{p-1}\\
                 &\leq C (t+M)^{(m+1)n(p-1)}\int_{\mathbb{R}^n}\vert w(t,x)\vert^pdx,
\end{aligned}
\end{equation}
then
\begin{equation}\label{prime2F}
\begin{aligned}
F^{\prime\prime}(t)&=t^{-\frac{\mu}{2}(p-1)}\int_{\mathbb{R}^n}\vert w(t,x)\vert^pdx\geq C (t+M)^{-\frac{\mu}{2}(p-1)-(m+1)n(p-1)}\vert F(t)\vert^p.
 \end{aligned}
\end{equation}

By \cite{Ben2022}, we know that
\begin{equation*}\label{lambda}
\lambda(t)=\Bigl(\frac{1}{m+1}\Bigr)^{\frac{1}{2(m+1)}}t^{\frac{1}{2}}K_{\frac{1}{2(m+1)}}\bigl(\frac{t^{m+1}}{m+1}\bigr)
\end{equation*}
 is a solution to
\begin{equation*}
\lambda^{\prime\prime}(t)-t^{2m}\lambda(t)=0,\ t\geq1,
\end{equation*}
where
\begin{equation*}\label{defK}
K_l(z)=\int_0^\infty e^{-z\cosh y}\cosh(ly)dy, \ l\in\mathbb{R}
\end{equation*}
is the modified Bessel function of  order $l$, which is a solution to
\begin{equation*}\label{bessel}
z^2\frac{d^2}{dz^2}w(z)+z\frac{d}{dz}w(z)-(z^2+l^2)w(z)=0.
\end{equation*}

Let
\begin{equation}
G(t)=\int_{\mathbb{R}^n}w(t,x)\psi(t,x)dx,
\end{equation}
where $\psi(t,x)=\lambda(t)\varphi(x)$ and
\begin{equation*}
 \varphi(x)=
 \begin{cases}
  \int_{S^{n-1}}e^{x\cdot \omega}d \omega, &n\geqslant 2,\\
  e^{x}+e^{-x},& n=1,
 \end{cases}
\end{equation*}
so, $\partial_t^2\psi-t^{2m}\Delta \psi=0.$ Furthermore, about $\lambda(t)$, $\varphi(x)$ and $G(t)$ we have
\begin{lemma}(\cite{Ben2022})\label{lambdaprop}
There exists $C>0$ such that for any $t\in[1,\infty)$,
\begin{equation}\label{equalprop}
C^{-1}t^{-\frac{m}{2}}e^{-\phi(t)}\leq\lambda(t)\leq Ct^{-\frac{m}{2}}e^{-\phi(t)}.
\end{equation}
\end{lemma}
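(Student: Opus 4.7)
My plan is to prove Lemma \ref{lambdaprop} by reducing the two-sided estimate on $\lambda(t)$ to the classical large-argument asymptotics of the modified Bessel function $K_\nu(z)$, and then patching the asymptotic bound with a compactness argument on a bounded interval $[1,T_0]$. Writing $\nu = \frac{1}{2(m+1)}$ and $\phi(t)=\frac{t^{m+1}}{m+1}$, we have $\lambda(t) = (m+1)^{-\frac{1}{2(m+1)}} t^{\frac{1}{2}} K_\nu(\phi(t))$, so all the content is in controlling $K_\nu(\phi(t))$ as $t$ ranges over $[1,\infty)$.

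First I would invoke (or derive from the integral representation $K_\nu(z) = \int_0^\infty e^{-z\cosh y}\cosh(\nu y)\,dy$ via Laplace's method at $y=0$) the asymptotic
\[
K_\nu(z) = \sqrt{\frac{\pi}{2z}}\,e^{-z}\bigl(1+O(z^{-1})\bigr)\quad\text{as }z\to\infty.
\]
Substituting $z=\phi(t)$, the factor $\sqrt{\pi/(2\phi(t))}$ becomes $\sqrt{\pi(m+1)/2}\,t^{-(m+1)/2}$, and combining with the $t^{1/2}$ prefactor gives
\[
\lambda(t) = C_0\, t^{-\frac{m}{2}}\,e^{-\phi(t)}\bigl(1+O(t^{-(m+1)})\bigr)\quad\text{as }t\to\infty,
\]
where $C_0 = (m+1)^{-\frac{1}{2(m+1)}}\sqrt{\pi(m+1)/2}>0$. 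This immediately yields the desired two-sided bound on some tail $[T_0,\infty)$.

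To handle $[1,T_0]$, I would observe that $K_\nu$ is continuous and strictly positive on $(0,\infty)$ (positivity is transparent from its integral representation), so $\lambda(t)$ and the comparison function $t^{-m/2}e^{-\phi(t)}$ are continuous and strictly positive on the compact interval $[1,T_0]$. Hence their ratio attains a positive minimum and finite maximum there. Taking the larger of the two resulting constants yields a single $C>0$ for which the bound holds uniformly on all of $[1,\infty)$.

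The main obstacle is really only bookkeeping: checking the exponent identity $\frac{1}{2}-\frac{m+1}{2}=-\frac{m}{2}$ to recover the precise power of $t$, and, if one insists on a self-contained derivation, applying Laplace's method to the integral representation to extract the leading constant $\sqrt{\pi/(2z)}$ and exponential decay $e^{-z}$ with an explicit error bound. Neither step involves a genuinely new idea — the statement is essentially a packaging of the standard $K_\nu$ asymptotics for the particular index and argument that arise from the Gellerstedt operator.
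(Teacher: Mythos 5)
Your proposal is correct: the exponent bookkeeping ($\tfrac12-\tfrac{m+1}{2}=-\tfrac m2$ with $z=\phi(t)=\tfrac{t^{m+1}}{m+1}$) checks out, the large-argument asymptotic $K_\nu(z)\sim\sqrt{\pi/(2z)}\,e^{-z}$ gives the bound on a tail, and positivity/continuity of $K_\nu$ on compact subsets of $(0,\infty)$ handles $[1,T_0]$. The paper itself gives no proof, simply citing \cite{Ben2022}, where the estimate is obtained by exactly this standard route, so your argument matches the intended one.
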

\begin{lemma}(\cite{Zh2006})\label{lemma3.3}
Let $r>1$, there exists a $C=C(m,n,r,M)>0$ such that
\begin{equation}
\int_{\vert x\vert\leq \phi(t)-\phi(1)+M}\varphi^{r}(x)dx\leq Ce^{r\phi(t)}\bigl(1+\phi(t)\bigr)^{\frac{(2-r)(n-1)}{2}}.
\end{equation}
\end{lemma}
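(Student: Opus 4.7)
The plan is to reduce the bound to a one-dimensional exponential integral after first establishing a sharp pointwise estimate on $\varphi$. The starting point for $n\geq 2$ is the rotational symmetry of $\varphi$: fixing $x=|x|e_n$ and parametrizing $S^{n-1}$ by $\omega_n=\cos\theta$ yields
$$\varphi(x)=|S^{n-2}|\int_{-1}^{1}e^{|x|t}(1-t^2)^{(n-3)/2}dt.$$
A Laplace-type expansion near the maximum $t=1$ (set $t=1-u/|x|$ and use $(1-t^2)\sim 2u/|x|$) gives the asymptotic $\varphi(x)\sim C|x|^{-(n-1)/2}e^{|x|}$ as $|x|\to\infty$. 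Combining with the boundedness of $\varphi$ on any compact set yields the uniform pointwise bound
$$\varphi(x)\leq C(1+|x|)^{-(n-1)/2}e^{|x|},\qquad x\in\mathbb{R}^n,$$
which for $n=1$ reduces to the trivial inequality $\varphi(x)=e^x+e^{-x}\leq 2e^{|x|}$.

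Next, I would pass to polar coordinates and set $R:=\phi(t)-\phi(1)+M$. Using $\rho^{n-1}\leq(1+\rho)^{n-1}$ and the pointwise bound above, I obtain
$$\int_{|x|\leq R}\varphi^r(x)dx\leq C\int_0^R(1+\rho)^{-r(n-1)/2}e^{r\rho}\rho^{n-1}d\rho\leq C\int_0^R(1+\rho)^{\alpha}e^{r\rho}d\rho,$$
where $\alpha:=(n-1)(2-r)/2$ is exactly the power appearing in the statement of the lemma.

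The final step is the one-dimensional estimate $\int_0^R(1+\rho)^{\alpha}e^{r\rho}d\rho\leq C(1+R)^{\alpha}e^{rR}$. When $\alpha\geq 0$, this follows at once from $(1+\rho)^{\alpha}\leq(1+R)^{\alpha}$ together with $\int_0^R e^{r\rho}d\rho\leq e^{rR}/r$. When $\alpha<0$, the substitution $\rho=R-s$ converts the integral into $e^{rR}\int_0^R(1+R-s)^{\alpha}e^{-rs}ds$; an integration by parts (or splitting the $s$-range at $R/2$, using $(1+R-s)^{\alpha}\asymp(1+R)^{\alpha}$ for $s\leq R/2$ and $e^{-rs}\leq e^{-rR/2}$ for $s\geq R/2$) shows the remaining integral is $O((1+R)^{\alpha})$, because the integrand concentrates near $s=0$. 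Combining this bound with $R\leq\phi(t)+C$ delivers the claimed inequality.

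The only genuinely nontrivial step is the uniform asymptotic bound for $\varphi(x)$; everything else is elementary calculus. A minor subtlety is ensuring the constant in the pointwise bound is uniform down to $|x|=0$, which is handled by splitting $\mathbb{R}^n$ into $\{|x|\leq 1\}$, on which $\varphi$ is a bounded continuous function, and $\{|x|\geq 1\}$, on which the Laplace-method asymptotic applies with $(1+|x|)\asymp|x|$. One notes in particular that the two sources of the factor $(1+\phi(t))^{(n-1)(2-r)/2}$, namely the polynomial correction $|x|^{-(n-1)/2}$ from $\varphi$ and the Jacobian $\rho^{n-1}$ from polar coordinates, combine precisely to reproduce the power appearing in the statement, which is a useful consistency check for the calculation.
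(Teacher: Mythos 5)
Your proof is correct: the paper itself gives no argument for this lemma (it simply cites Yordanov--Zhang \cite{Zh2006}), and your derivation --- the Laplace-method asymptotic $\varphi(x)\sim C|x|^{-(n-1)/2}e^{|x|}$, polar coordinates, and the one-dimensional estimate $\int_0^R(1+\rho)^{\alpha}e^{r\rho}d\rho\leq C(1+R)^{\alpha}e^{rR}$ --- is exactly the standard proof from that reference, adapted to the radius $\phi(t)-\phi(1)+M$. The details you supply (uniformity of the pointwise bound near $x=0$, the split at $s=R/2$ for $\alpha<0$, and the comparison $1+R\asymp 1+\phi(t)$) all check out.
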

\begin{lemma}\label{G1}
Let $w$ be a solution to (\ref{changeeq}) and the assumptions in Theorem \ref{theorem3} hold, then there exists a $T_0=T_0(m, n)>1$ such that
\begin{equation}
G(t)\geq C t^{-m}, \ \mbox{for all}  \ t\geq T_0.
\end{equation}
\end{lemma}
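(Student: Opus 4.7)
The plan is to obtain a lower bound on $G(t)$ through a Wronskian-type comparison with the linear solution $\lambda(t)$. Setting $g(t) = \int_{\mathbb{R}^n} w(t,x) \varphi(x)\, dx$, we have $G(t) = \lambda(t) g(t)$. The identity $\Delta\varphi = \varphi$ (immediate from the definition of $\varphi$) combined with (\ref{changeeq}) and integration by parts gives
\begin{equation*}
g''(t) - t^{2m} g(t) = t^{-\frac{\mu}{2}(p-1)} \int_{\mathbb{R}^n} |w(t,x)|^p \varphi(x)\, dx =: F(t) \geq 0.
\end{equation*}
Since $\lambda'' - t^{2m}\lambda = 0$, the Wronskian-type functional $W(t) := \lambda(t) g'(t) - \lambda'(t) g(t)$ satisfies $W'(t) = \lambda(t) g''(t) - \lambda''(t) g(t) = \lambda(t) F(t) \geq 0$, so $W(t) \geq W(1)$ for all $t \geq 1$, and the task reduces to translating this into a lower bound on $g$.

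The first substantive step is to confirm $W(1) > 0$. Using the recurrence $K_\nu'(z) = -K_{\nu-1}(z) - \frac{\nu}{z} K_\nu(z)$ for the modified Bessel function with $\nu = 1/(2(m+1))$, a short calculation exploiting $\nu(m+1) = 1/2$ collapses $\lambda'(t)$ to $-c_m t^{m+1/2} K_{1-\nu}(\phi(t))$, which is strictly negative on $[1,\infty)$. Since $\lambda(1) > 0$, $u_0, u_1 \geq 0$, $\mu \geq 0$, and $(u_0, u_1)$ is not identically zero,
\begin{equation*}
W(1) = \varepsilon \lambda(1) \int_{\mathbb{R}^n}\LC\tfrac{\mu}{2} u_0(x) + u_1(x)\RC \varphi(x)\, dx + \varepsilon |\lambda'(1)| \int_{\mathbb{R}^n} u_0(x)\, \varphi(x)\, dx > 0.
\end{equation*}

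From the identity $(g/\lambda)'(t) = W(t)/\lambda(t)^2 \geq W(1)/\lambda(t)^2$, I then integrate on $[1,t]$. Lemma \ref{lambdaprop} provides the upper bound $\lambda(s) \leq C s^{-m/2} e^{-\phi(s)}$, so $\lambda(s)^{-2} \geq C^{-2} s^m e^{2\phi(s)}$; since $s^m e^{2\phi(s)} = \tfrac{d}{ds}\LB \tfrac{1}{2} e^{2\phi(s)}\RB$, the integral evaluates to $\tfrac{1}{2}\bigl(e^{2\phi(t)} - e^{2\phi(1)}\bigr) \geq c\, e^{2\phi(t)}$ once $t$ is past some $T_0 = T_0(m,n)$. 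Thus $g(t) \geq c\, W(1)\, \lambda(t)\, e^{2\phi(t)}$, and applying the matching lower bound $\lambda(t) \geq C^{-1} t^{-m/2} e^{-\phi(t)}$ from the same lemma one more time yields
\begin{equation*}
G(t) = \lambda(t) g(t) \geq c\, W(1)\, \lambda(t)^2\, e^{2\phi(t)} \geq C\, t^{-m},
\end{equation*}
which is the required estimate. The only nonroutine ingredient is the sign check $\lambda'(1) < 0$; once that is in hand, the exponential factors $e^{\pm 2\phi(t)}$ arising from Lemma \ref{lambdaprop} cancel exactly, and the polynomial prefactor $t^{-m}$ of the statement appears automatically, with the nonlinear term $F$ contributing only the sign information needed for $W$ to be nondecreasing.
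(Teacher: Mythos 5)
Your proof is correct, and it is essentially the standard argument that the paper itself omits by deferring to Lemma 3.3 of \cite{Ben2022} and Lemma 3.2 of \cite{Ha2021}: the Wronskian $W=\lambda g'-\lambda' g$ with $W'=\lambda F\ge 0$, the sign check $\lambda'<0$ via the Bessel recurrence, and the exact cancellation of $e^{\pm 2\phi(t)}$ from Lemma \ref{lambdaprop} are precisely the ingredients of that referenced proof. No gaps; the only cosmetic remark is that your constant $C$ depends on $\varepsilon$ and the data through $W(1)$, which is consistent with the cited lemmas.
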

\begin{remark}
The proof of Lemma \ref{G1} is quite similar to  Lemma 3.3 in \cite{Ben2022} and Lemma 3.2 in \cite{Ha2021}, so it is not repeated here.
\end{remark}
Utilizing H$\ddot{\text{o}}$lder$^{\prime}$s inequality and Lemmas \ref{lambdaprop}-\ref{G1}, for $t\geq T_0$, we have
\begin{equation*}
\begin{aligned}
\vert G(t)\vert^p&\leq\int_{\mathbb{R}^n}\vert w(t,x)\vert^pdx\Bigl(\int_{\mathbb{R}^n}\vert\psi(t,x)\vert^{\frac{p}{p-1}}dx\Bigr)^{p-1}\\
                 &\leq C\lambda^p(t)e^{p\phi(t)}(1+\phi(t))^{\frac{(p-2)(n-1)}{2}}\int_{\mathbb{R}^n}\vert w(t,x)\vert^pdx\\
                 &\leq C(t+M)^{-\frac{mp}{2}}(1+\phi(t))^{\frac{(p-2)(n-1)}{2}}\int_{\mathbb{R}^n}\vert w(t,x)\vert^pdx,
\end{aligned}
\end{equation*}
therefore,
\begin{equation}\label{Fprimeprime}
\begin{aligned}
F^{\prime\prime}(t)&=t^{-\frac{\mu}{2}(p-1)}\int_{\mathbb{R}^n}\vert w(t,x)\vert^pdx\\
                   &\geq t^{-\frac{\mu}{2}(p-1)}(t+M)^{\frac{mp}{2}}(1+\phi(t))^{-\frac{(p-2)(n-1)}{2}}\vert G(t)\vert^p\\
                   &\geq C(t+M)^{-\frac{\mu}{2}(p-1)-\frac{mp}{2}-\frac{(m+1)(p-2)(n-1)}{2}},
\end{aligned}
\end{equation}
integrating twice we arrive at
\begin{equation}\label{primeprime}
F(t)\geq C(t+M)^{\max\{-\frac{\mu}{2}(p-1)-\frac{mp}{2}-\frac{(m+1)(n-1)(p-2)}{2}+2, 1\}}.
\end{equation}

In the case of $n=2$, since $p=p_F(2(m+1)+\frac{\mu}{2}-1)$ and $\mu\geq a(m)$,  we get
\begin{equation*}
\begin{aligned}
-\frac{\mu}{2}(p-1)-\frac{mp}{2}-\frac{(m+1)(n-1)(p-2)}{2}+2=\frac{3}{2}-\frac{\mu}{2[2(m+1)+\frac{\mu}{2}-1]}\leq1,
\end{aligned}
\end{equation*}
so by (\ref{primeprime}), $F(t)\geq C(t+M)$, substituting it into (\ref{prime2F}) gives
\begin{equation*}
\begin{aligned}
F^{\prime\prime}(t)\geq C (t+M)^{-\frac{4(m+1)+\mu}{2(m+1)+\frac{\mu}{2}-1}+1+\frac{2}{2(m+1)+\frac{\mu}{2}-1}}=C(t+M)^{-1},
\end{aligned}
\end{equation*}
integrating twice and combining (\ref{prime2F}), we get
\begin{equation}\label{kato}
  \left\{
\begin{aligned}
&F^{\prime\prime}(t)\geq C (t+M)^{-\frac{4(m+1)+\mu}{2(m+1)+\frac{\mu}{2}-1}}\vert F(t)\vert^p, \ t\geq T_0,\\
&F(t)\geq C (t+M)\log(t+M).
\end{aligned}
 \right.
  \end{equation}
Let $q=\frac{4(m+1)+\mu}{2(m+1)+\frac{\mu}{2}-1}$, $a=1$, then $(p-1)a=q-2$, so Lemma \ref{katolemma} shows $F(t)$ blows up in finite time.

In the case of $n=1$, since $p=p_F(m+\frac{\mu}{2})$,
\begin{equation}
-\frac{\mu}{2}(p-1)-\frac{mp}{2}-\frac{(m+1)(n-1)(p-2)}{2}+2=2-\frac{m}{2}-\frac{2m+2\mu}{2m+\mu}\leq1
\end{equation}
holds, so $F(t)\geq C(t+M)$ by (\ref{primeprime}), substituting it into (\ref{prime2F}), we have $F^{\prime\prime}(t)\geq C (t+M)^{-2-\frac{4}{2m+\mu}}=C(t+M)^{-1},$ therefore,
\begin{equation}\label{kato2}
  \left\{
\begin{aligned}
&F^{\prime\prime}(t)\geq C (t+M)^{-2-\frac{4}{2m+\mu}}\vert F(t)\vert^p,\ t\geq T_0,\\
&F(t)\geq C (t+M)\log(t+M),
\end{aligned}
 \right.
  \end{equation}
let $q=2+\frac{4}{2m+\mu}$, $a=1$, then $(p-1)a=q-2$, so  $F(t)$ blows up in finite time.

\vskip 0.2cm
\noindent {\bf Acknowledgments}

Y-Q. Li is supported by the Postgraduate Research $\&$ Practice Innovation Program of Jiangsu Province (KYCX24$\_$1786). F. Guo is supported by the Priority Academic Program Development of Jiangsu Higher Education Institutions and the NSF of Jiangsu Province (BK20221320).

\renewcommand{\theequation}{A\arabic{equation}}
\setcounter{equation}{0}
%
%

\end{document}